\newtheorem{theorem}{Theorem}[section]
\theoremstyle{definition}
\newtheorem{definition}[theorem]{Definition}
\newtheorem{example}[theorem]{Example}
\theoremstyle{remark}
\newtheorem{remark}[theorem]{Remark}
\numberwithin{equation}{section}
\begin{document}
\setcounter{page}{1}

\title[Indefinite Kenmotsu statistical manifold]{Contact $CR$ and $SCR$ lightlike submanifolds of an indefinite Kenmotsu statistical manifold}

\author[Shagun, J. Kaur]{Shagun$^1$, Jasleen Kaur$^2$$^{*}$}

\address{$^{1}$ Department of Mathematics, Punjabi University, Patiala, India.}
\email{shagun.bhatti82@gmail.com}

\address{$^{2}$ Department of Mathematics, Punjabi University, Patiala, India.}
\email{jass2381@gmail.com}


\subjclass[2010]{Primary 58A05; Secondary 53D10, 53D15.}

\keywords{Indefinite Kenmotsu manifold, indefinite statistical structure, totally geodesic foliation, integrability.}

\begin{abstract}
This research article introduces the concept of lightlike submanifolds of an indefinite Kenmotsu statistical manifold. Various results on geometry of contact $CR$ and $SCR$-lightlike submanifolds have been developed. Some characterization theorems on the integrability of distributions and on the geodesicity of these submanifolds have been obtained. Appropriate examples have been presented.
\end{abstract} \maketitle

\section{Introduction}

\noindent Statistical manifolds, which are geometrically formulated as Riemannian manifolds with a certain affine connection were introduced by \cite{ref12} and afterwards explored by \cite{ref1}, \cite{ref6}, \cite{ref13}, et.al., where they studied the probability information from the perspective of differential geometry. Henceforth, the notion of statistical counterpart of the Kenmotsu manifold, that is, Kenmotsu statistical manifold was initiated by \cite{ref14} and subseqently researched by \cite{ref2}, \cite{ref3}, \cite{ref4} et.al. for its geometrical properties and statistical curvature. The statistical manifolds hold a considerable importance in the realm of neural networks, control systems and statistical inference.\\

\noindent Thus motivated, we introduce the idea of indefinite Kenmotsu statistical manifold by consolidating the concept of statistical structure with the indefinite Kenmotsu metric structure and examine its lightlike theory of submanifolds.The geometry of contact $CR$ and $SCR$-lightlike submanifolds of the indefinite Kenmotsu statistical manifold has been detailed with examples. The characterization theorems on mixed geodesicity, totally geodesicity, integrability of various distributions and on totally geodesic foliations in contact  $CR$ and $SCR$ lightlike submanifolds have been given.

\section{Lightlike geometry of an indefinite statistical manifold}

\noindent The lightlike theory of submanifolds following \cite{ref5}, \cite{ref8} is as follows:\\

\noindent Let $M$ be a $m$-dimensional submanifold of a real $(m+n)$-dimensional semi-Riemannian manifold $(\bar{M},\bar{g})$ where $m,n \geq 1$. If $M$ admits a degenerate metric $g$ induced from $\bar{g}$, whose Radical distribution $Rad(TM)$ of rank $r$ ($1 \leq r \leq m$) is defined by
\[
Rad (T_{x}M) = \{\xi \in T_{x}M : g_{x}(\xi, X) = 0,\; X \in \Gamma(TM),\; x \in M\}
\]
such that $Rad(TM) = TM \cap TM^{\perp}$, where
\[
TM^{\perp} = \bigcup_{x \in M} \{u \in T_{x}\bar{M} : \bar{g}(u, v) = 0, \; \forall \; v \in T_{x}M\},
\]
then $M$ is said to be lightlike submanifold of $\bar{M}$.\\

\noindent Also, $TM = Rad(TM) \perp S(TM)$ where $S(TM)$ is a complementary distribution of $Rad(TM)$ in $TM$ known as screen distribution and $TM^{\perp} = Rad(TM) \perp S(TM^{\perp})$ where $S(TM^{\perp})$ is a complementry vector bundle of  $Rad(TM)$ in $TM^{\perp}$ known as screen transversal vector bundle.\\

\noindent Furthermore, there exists a local frame $\{N_{i}\}$ of sections with values in the orthogonal complement of $S(TM^{\perp})$ in $(S(TM))^{\perp}$ such that $\bar{g}(\xi_{i},N_{j}) = \delta_{ij}$ and $\bar{g}(N_{i},N_{j}) = 0$ for any local basis $\{\xi_{i}\}$ of $Rad(TM)$. It follows that there exists a lightlike transversal vector bundle $ltr(TM)$ locally spanned by $\{N_{i}\}$ where $i=1,2,...,r$.\\

\noindent Let $tr(TM)$ be the complementary (but not orthogonal) vector bundle to $TM$ in $T\bar{M}\mid_{M}$.\\

\noindent Then
\[
tr(TM) = ltr(TM) \perp S(TM^{\perp}),
\]
\[
T\bar{M}\mid_{M} = S(TM) \perp \{Rad(TM) \oplus ltr(TM)\} \perp S(TM^{\perp}).
\]

\noindent Let $\tilde{\nabla}$ be the Levi-Civita connection on $\bar{M}$, then the Gauss and Weingarten formulae are given by
\begin{equation}\label{27}
	\tilde{\nabla}_{X}Y = \nabla_{X}Y + h(X, Y ), \hspace{.2in} \forall \; X, Y \in\Gamma(TM)
\end{equation}
\begin{equation}\label{28}
	\tilde{\nabla}_{X}V = - A_{V}X + \nabla_{X}^{\perp}V, \hspace{.2in} \forall \;  X \in\Gamma(TM), \; V \in \Gamma(tr(TM))
\end{equation}
where $\nabla_{X}Y,\; A_{V}X \in \Gamma(TM)$ and $h(X, Y), \; \nabla_{X}^{\perp}V \in \Gamma(tr(TM))$. Here, $\nabla$ and $\nabla^{\perp}$ are the linear connections on $M$ and vector bundle $tr(TM)$, respectively.\\

\noindent 	Consider the projection morphisms
\[
L: tr(TM) \rightarrow ltr(TM) \quad and \quad S: tr(TM) \rightarrow S(TM^{\perp})
\]
then, (\ref{27}) and (\ref{28}) become
\begin{equation}\label{eq29}
	\begin{split}
		\tilde{\nabla}_{X}Y = \nabla_{X}Y + h^{l}(X, Y ) + h^{s}(X, Y ), \\
		\tilde{\nabla}_{X}V = - A_{V}X + D_{X}^{l}V + D_{X}^{s}V.
	\end{split}
\end{equation}

\noindent In particular, we have
\begin{equation}\label{eq210}
	\begin{split}
		\tilde{\nabla}_{X}N = - A_{N}X + \nabla_{X}^{l}N + D^{s}(X, N), \\ \tilde{\nabla}_{X}W = - A_{W}X + \nabla_{X}^{s}W + D^{l}(X, W)
	\end{split}
\end{equation}

\noindent where $X, Y \in \Gamma(TM), \; N \in \Gamma(ltr(TM))$ and $W \in \Gamma(S(TM^{\perp}))$.\\

\noindent	Here, $\nabla_X^l N, \; D^l(X,W) \in \Gamma(ltr(TM)),\quad \nabla_X^s W, \; D^s(X,N)  \in \Gamma(S(TM^{\perp})),\\ \quad \nabla_X Y,\; A_N X, \; A_W X \in \Gamma(TM)$ and $h^l(X,Y) = Lh(X,Y), h^s(X,Y) = Sh(X,Y),\\ D_X^{l}V=L( \nabla_X^{\perp}V), \; D_X^{s}V=S( \nabla_X^{\perp}V)$; wherein $h^{l}$ and $h^{s}$ are respectively known as the lightlike second fundamental form and the screen second fundamental form on $M$. \\

\noindent Let $P$ be the projection morphism of $TM$ on $S(TM)$, then we consider the following decompositions:
\[
\nabla_{X}PY = \nabla_{X}^{\prime}PY + h^{\prime}(X, PY), \hspace{.2in} \nabla_{X}\xi= - A_{\xi}^{\prime}X + {\nabla_{X}^{\prime}}^{\perp}\xi
\]
where $\{\nabla_{X}^{\prime}Y, - A_{\xi}^{\prime}X\}$ $\in$ $\Gamma(S(TM))$ and $\{h^{\prime}(X, PY), {\nabla_{X}^{\prime}}^{\perp}\xi\}$ $\in$ $\Gamma(Rad(TM))$, respectively; $\nabla^{\prime}$ and ${\nabla^{\prime}}^{\perp}$ are linear connections on $S(TM)$ and $Rad(TM)$, respectively.\\

\noindent From the above equations, we obtain
\[
\bar{g}(h^{l}(X, PY), \xi) = g(A_{\xi}^{\prime}X, PY), \hspace{.2in} \bar{g}(h^{\prime}(X, PY), N) = g(A_{N}X, PY),
\]
\[
\bar{g}(h^{l}(X, \xi), \xi) = 0, \hspace{.2in} g(A_{\xi}^{\prime}PX, PY)=g(PX, A_{\xi}^{\prime}PY),
\]
\[
\bar{g}(A_{\xi}^{\prime}X, N) = 0, \hspace{.2in} \bar{g}(A_{N}X, N) = 0, \hspace{.2in} A_{\xi}^{\prime}\xi = 0 
\]

\noindent	for any $X, Y \in \Gamma(TM),\; \xi \in \Gamma(Rad(TM))$ and $N \in \Gamma(ltr(TM))$.

\begin{definition}
	A pair $(\bar{\nabla}, \bar{g})$ is called an indefinite statistical structure on a semi-Riemannian manifold $\bar{M}$ where $\bar{g}$ is a semi-Riemannian metric of constant index $q\geq1$ on $\bar{M}$, if 
	\[
	\bar{\nabla}_{X}Y-\bar{\nabla}_{Y}X=[X,Y]
	\]
	\[
	\textnormal{and} \quad (\bar{\nabla}_{X}\bar{g})(Y, Z) = (\bar{\nabla}_{Y}\bar{g})(X, Z)
	\]
	hold for any $X, Y, Z \in \Gamma(T\bar{M})$.
	
\end{definition}

\noindent Moreover, there exists $\bar{\nabla}^{*}$ which is a dual connection of $\bar{\nabla}$ with respect to $\bar{g}$, such that
\begin{equation}\label{eq37}
	X\bar{g}(Y, Z) = \bar{g}(\bar{\nabla}_{X}Y, Z) + \bar{g}(Y, \bar{\nabla}_{X}^{*}Z) \qquad \forall \; X, Y, Z \in \Gamma(T\bar{M}).
\end{equation}

\noindent If $(\bar{\nabla},\bar{g})$ is an indefinite statistical structure on $\bar{M}$, then so is $(\bar{\nabla}^{*}, \bar{g})$. Hence, the indefinite statistical manifold is denoted by $(\bar{M},\bar{\nabla},\bar{\nabla}^{*},\bar{g})$.\\

\noindent The lightlike theory available so far leads to the Gauss and Weingarten formulae for the structure of a lightlike submanifold $(M,g)$ of an indefinite statistical manifold  $(\bar{M},\bar{g},\bar{\nabla},\bar{\nabla^*})$  as follows:
\begin{equation}\label{eq38}
	\begin{split}
		\bar\nabla_X Y = \nabla_X Y +h^l(X,Y) + h^s(X,Y),  \\
		\bar\nabla^*_X Y = \nabla^*_X Y +h^{*l}(X,Y) +h^{*s}(X,Y),   
\end{split} \end{equation}	
\[
\bar\nabla_X V = -A_V X + D_X^{l}V +D_X^{s}V , \quad \bar\nabla_X^* V = -A_V^* X + D_X^{*l}V +D_X^{*s}V ,
\]
\[
\bar\nabla_X N = -A_N X + \nabla^l_X N +D^s(X,N), \quad \bar\nabla_X^* N = -A_N^* X + \nabla^{*l}_X N +D^{*s}(X,N),
\]
\begin{equation}\label{eqF}
	\begin{split}
		\bar\nabla_X W =-A_W X +\nabla^s_X W +D^l(X,W),\\ \bar\nabla_X^*W =-A_W^*X + \nabla^{*s}_XW +D^{*l}(X,W)
	\end{split}
\end{equation}

for any  $X, Y \in \Gamma(TM)$, $V \in \Gamma(tr(TM))$, $N \in \Gamma(ltr(TM))$ and $W\in \Gamma(STM^{\perp})$.\\

\noindent 	Considering the corresponding projection morphism $P$ of tangent bundle $TM$ to the screen distribution, the following decompositions w.r.t $\nabla$ and $\nabla^*$ hold:
\[
\nabla_X PY = \nabla_X^{\prime} PY + h^{\prime}(X,PY), \quad \nabla_X^* PY = \nabla_X^{*\prime} PY + h^{*\prime}(X,PY),
\]
\begin{equation}\label{eq312}
	\nabla_X \xi = - A^{\prime}_{\xi} X + \nabla_X^{\prime \perp} \xi, \quad \nabla_X^* \xi = - A^{*\prime}_{\xi} X + {\nabla^*}_X^{\prime \perp} \xi
\end{equation}
for any $X, Y \in \Gamma(TM)$, $\xi \in \Gamma(Rad(TM))$. \\

\noindent 	Thus, we have
\[
\bar g(h^l(X,PY),\xi) = g(A^{*\prime}_\xi X,PY), \quad \bar g(h^{*l}(X,PY),\xi) = g(A^{\prime}_\xi X,PY),
\]
\[
\bar g(h^{\prime}(X,PY),N) = g(A^*_NX,PY), \quad \bar g(h^{*\prime}(X,PY),N) = g(A_NX,PY).
\]

\section{Indefinite Kenmotsu statistical manifold}

\noindent Let $\widehat{\bar{\nabla}}$ be the Levi-Civita connection of $\bar{g}$. Therefore, by \cite{ref7}, we consider the decomposition $\widehat{\bar{\nabla}} = \frac{1}{2}(\bar{\nabla} + \bar{\nabla}^{*})$, where $\bar{\nabla}^{*}$ is called the dual connection of $\bar{\nabla}$ with respect to $\bar{g}$.\\

\noindent 	For a statistical structure $(\bar{\nabla}, \bar{g})$ on $\bar{M}$, we set 
\begin{equation}\label{eqR1}
	K_{X}Y =K(X, Y)= \bar{\nabla}_{X}Y - \widehat{\bar{\nabla}}_{X}Y
\end{equation}
for any $X, Y \in \Gamma(TM)$ where $K$ is the difference (1,2) type tensor of a torsion free affine connection $\bar{\nabla}$ and the Levi-Civita connection $\widehat{\bar{\nabla}}$.\\

\noindent Since $\bar{\nabla}$ and $\widehat{\bar{\nabla}}$ are torsion free, then we have 
\begin{equation}\label{eq316}
	K_{X}Y = K_{Y}X, \quad \bar{g}(K_{X}Y, Z) = \bar{g}(Y, K_{X}Z) \qquad \forall \; X,Y,Z \in \Gamma(T\bar{M}).
\end{equation} 	

\noindent Also, 
\begin{equation}\label{eqR2}
	K(X,Y) = \widehat{\bar{\nabla}}_{X}Y - \bar{\nabla}^{*}_{X}Y = \frac{1}{2}(\bar{\nabla}_{X}Y - \bar{\nabla}^{*}_{X}Y).
\end{equation}

\noindent Let $(\bar{M},\bar{g})$ be a semi-Riemannian manifold of dimension $(2n+1)$. If $\bar{g}$ is a semi-Riemannian metric, $\phi$ is a $(1, 1)$ tensor field, $\nu$ is a characteristic vector field and $\eta$ is a $1$-form, such that
\begin{equation}\label{eq31}
	\bar{g}(\phi X, \phi Y) = \bar{g}(X, Y) - \eta(X)\eta(Y), \qquad \bar{g}(\nu, \nu) = 1,   
\end{equation}
\begin{equation}\label{eq32}
	\phi^{2}(X) = - X + \eta(X)\nu, \quad \bar{g}(X, \nu) = \eta(X),\quad \bar{g}(\phi X, Y)  + \bar{g}(X, \phi Y) = 0
\end{equation}

\noindent	which follows that $\phi\nu = 0$ and  $\eta o \phi = 0$ for all $X,Y \in \Gamma(TM)$, then $(\phi, \nu, \bar{g})$ is called an almost contact metric structure on $\bar{M}$.

\begin{definition}	
	\cite{ref9} An almost contact metric structure on $\bar{M}$ is called an indefinite Kenmotsu structure if
	\begin{equation}\label{eq33}
		(\widehat{\bar{\nabla}}_{X}\phi)Y=\eta(Y)\phi X-\bar{g}(\phi X, Y)\nu, \quad 	\widehat{\bar{\nabla}}_{X}\nu=-X+\eta(X)\nu	
	\end{equation}
	
	\noindent	holds for any $X, Y \in \Gamma(TM)$, where $\widehat{\bar{\nabla}}$ is a Levi-Civita Connection.  
\end{definition}	

\begin{definition}\label{def1}
	A quadruplet $(\bar{\nabla} = \widehat{\bar{\nabla}} + K, \bar{g}, \phi, \nu)$ is called an indefinite Kenmotsu statistical structure on $\bar{M}$ if 
	\begin{enumerate}
		\item $(\bar{g}, \phi, \nu)$ is an indefinite Kenmotsu structure,
		\item $(\bar{\nabla}, \bar{g})$ is a statistical structure on $\bar{M}$ and
	\end{enumerate}
	\begin{equation}\label{eq317}
		K(X, \phi Y) = - \phi K(X, Y) 
	\end{equation}
	holds for any $X, Y \in \Gamma(TM)$. Then $(\bar{M}, \bar{\nabla}, \bar{g},\phi, \nu)$ is said to be an indefinite Kenmotsu statistical manifold.
\end{definition}

\noindent If $(\bar{M}, \bar{\nabla}, \bar{g}, \phi, \nu)$ is an indefinite Kenmotsu statistical manifold, so is $(\bar{M}, \bar{\nabla}^{*}, \bar{g}, \phi, \nu)$.	

\begin{theorem}
	For an almost contact metric structure $(\bar{g}, \phi, \nu)$ on an indefinite statistical manifold $(\bar{M}, \bar{\nabla}, \bar{\nabla}^{*}, \bar{g})$, $(\bar{\nabla}, \bar{\nabla}^{*}, \bar{g}, \phi, \nu)$ is said to be an indefinite Kenmotsu statistical struture on $\bar{M}$ iff:
	\begin{equation}\label{eq67}
		\bar{\nabla}_{X}\phi Y - \phi\bar{\nabla}^{*}_{X}Y = \eta(Y)\phi X - \bar{g}(\phi X, Y)\nu \quad and
	\end{equation}
	\begin{equation}\label{eq68}
		\bar{\nabla}_{X}\nu = -X + \{\eta(X)+\mu(X)\}\nu
	\end{equation}	
	hold for all the vector fields $X, Y$ on $\bar{M}$, where $\mu(X)= \eta(\bar{\nabla}_{X}\nu)=-\eta(\bar{\nabla}_{X}^{*}\nu)=\eta(K(\nu,\nu))\eta(X)$.
\end{theorem}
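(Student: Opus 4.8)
\noindent The plan is to prove both directions by passing to the Levi--Civita connection $\widehat{\bar\nabla}=\frac{1}{2}(\bar\nabla+\bar\nabla^{*})$ and the difference tensor $K$ of \eqref{eqR1}, using $\bar\nabla_XY=\widehat{\bar\nabla}_XY+K(X,Y)$ and, by \eqref{eqR2}, $\bar\nabla^{*}_XY=\widehat{\bar\nabla}_XY-K(X,Y)$. For the forward implication, assuming $(\bar M,\bar\nabla,\bar g,\phi,\nu)$ is an indefinite Kenmotsu statistical manifold, I would expand
\[
\bar\nabla_X\phi Y-\phi\bar\nabla^{*}_XY=(\widehat{\bar\nabla}_X\phi)Y+\bigl(K(X,\phi Y)+\phi K(X,Y)\bigr),
\]
so that the first identity of \eqref{eq33} and condition \eqref{eq317} of Definition \ref{def1} immediately yield \eqref{eq67}. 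For \eqref{eq68} the point is that putting $Y=\nu$ in \eqref{eq317} and using $\phi\nu=0$ gives $\phi K(X,\nu)=0$; since $\ker\phi=\mathbb{R}\nu$ by \eqref{eq32}, and the symmetries \eqref{eq316} together with $\phi K(\nu,\nu)=0$ force the coefficient to be $\eta(K(\nu,\nu))\eta(X)$, one gets $K(X,\nu)=\mu(X)\nu$; adding $\widehat{\bar\nabla}_X\nu=-X+\eta(X)\nu$ from \eqref{eq33} then produces \eqref{eq68}, the three stated expressions for $\mu$ following readily from $K(X,\nu)=\mu(X)\nu$ and \eqref{eq316}.

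For the converse, assume \eqref{eq67} and \eqref{eq68}. Rewriting \eqref{eq67} through the two decompositions above casts it as the single identity
\begin{equation}\label{eqstar}
(\widehat{\bar\nabla}_X\phi)Y+\bigl(K(X,\phi Y)+\phi K(X,Y)\bigr)=\eta(Y)\phi X-\bar g(\phi X,Y)\nu ,
\end{equation}
and the whole task becomes to split \eqref{eqstar} into its ``metric'' part, the first identity of \eqref{eq33} for $\widehat{\bar\nabla}$, and its ``difference'' part \eqref{eq317}. First I would set $Y=\nu$ in \eqref{eqstar}: using $\phi\nu=0$, the general identity $(\widehat{\bar\nabla}_X\phi)\nu=-\phi\widehat{\bar\nabla}_X\nu$, and $\widehat{\bar\nabla}_X\nu=\bar\nabla_X\nu-K(X,\nu)$ together with \eqref{eq68}, this collapses to $\phi K(X,\nu)=0$; as above this forces $K(X,\nu)=\mu(X)\nu$ with $\mu(X)=\eta(K(\nu,\nu))\eta(X)$, whence \eqref{eq68} reduces to $\widehat{\bar\nabla}_X\nu=-X+\eta(X)\nu$, the second identity of \eqref{eq33}.

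The remaining step, which I expect to be the main obstacle, is to separate the first identity of \eqref{eq33} from \eqref{eq317} inside \eqref{eqstar}: both $(\widehat{\bar\nabla}_X\phi)Y$ and $K(X,\phi Y)+\phi K(X,Y)$ are $\bar g$-skew in their last argument, so contracting with $\nu$ or symmetrising gives nothing new. To break the impasse I plan to replace $Y$ by $\phi Y$ in \eqref{eqstar} and expand using $\phi^{2}X=-X+\eta(X)\nu$ from \eqref{eq32} together with the already-proved $\widehat{\bar\nabla}_X\nu=-X+\eta(X)\nu$ and $K(X,\nu)=\mu(X)\nu$; comparing the outcome with the identity obtained by applying $\phi$ to \eqref{eqstar}, and cancelling, one is left with $\phi\bigl(K(X,\phi Y)+\phi K(X,Y)\bigr)=0$. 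Since $K(X,\phi Y)+\phi K(X,Y)$ is $\bar g$-orthogonal to $\nu$ (from $K(X,\nu)=\mu(X)\nu$, the symmetry \eqref{eq316}, and $\phi\nu=0$) and $\ker\phi=\mathbb{R}\nu$, this gives \eqref{eq317}; feeding it back into \eqref{eqstar} leaves $(\widehat{\bar\nabla}_X\phi)Y=\eta(Y)\phi X-\bar g(\phi X,Y)\nu$, the first identity of \eqref{eq33}. Thus $(\bar g,\phi,\nu)$ is an indefinite Kenmotsu structure, \eqref{eq317} holds, and $(\bar\nabla,\bar g)$ is statistical by hypothesis, so by Definition \ref{def1} $(\bar M,\bar\nabla,\bar g,\phi,\nu)$ is an indefinite Kenmotsu statistical manifold.
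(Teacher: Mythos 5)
Your argument is correct and follows essentially the same route as the paper: both directions rest on the decomposition $\bar\nabla=\widehat{\bar\nabla}+K$, $\bar\nabla^{*}=\widehat{\bar\nabla}-K$, and the converse hinges on the same key substitution $Y\mapsto\phi Y$ in \eqref{eq67} combined with $\phi^{2}=-\mathrm{id}+\eta\otimes\nu$ to separate \eqref{eq317} from the Kenmotsu identity \eqref{eq33}. The only real divergence is at the endgame of the converse, where the paper first derives the dual identity $\bar\nabla^{*}_{X}\phi Y-\phi\bar\nabla_{X}Y=\eta(Y)\phi X-\bar g(\phi X,Y)\nu$ and then adds and subtracts it with \eqref{eq67}, whereas you reduce to $\phi\bigl(K(X,\phi Y)+\phi K(X,Y)\bigr)=0$ and eliminate the possible $\nu$-component explicitly via $\ker\phi=\mathbb{R}\nu$ together with $\eta\bigl(K(X,\phi Y)+\phi K(X,Y)\bigr)=0$ --- a point the paper passes over silently when it cancels $\phi$.
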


\begin{proof}
	For an indefinite Kenmotsu statistical manifold, using (\ref{eqR1}) and (\ref{eqR2}), we get
	\[
	\bar{\nabla}_{X}\phi Y-\phi \bar{\nabla}^{*}_{X}Y=K_X\phi Y+\widehat{\bar{\nabla}}_{X}\phi Y-\phi \widehat{\bar{\nabla}}_{X}Y+\phi K_XY
	\]
	\[
	\bar{\nabla}_{X}\phi Y-\phi \bar{\nabla}^{*}_{X}Y=\eta(Y)\phi X-\bar{g}(\phi X, Y)\nu.	
	\]
	
	\noindent Replacing $\bar{\nabla}$ by $\bar{\nabla}^{*}$ and $Y$ by $\nu$, we have
	\[
	\phi\bar{\nabla}_{X}\nu=-\phi X+\bar{g}(\phi X,\nu)\nu.
	\]
	Therefore, from the concept of almost contact metric structure, we obtain (\ref{eq68}).\\
	
	\noindent Conversely, replacing $Y$ by $\phi Y$ in (\ref{eq67}), we derive
	\[
	\phi\{\bar{\nabla}_{X}\phi^{2}Y-\phi \bar{\nabla}^{*}_{X}\phi Y\}=0, \; \textnormal{where} \; \eta(\phi Y)=0 \; \textnormal{and} \; \phi\nu=0.
	\]
	
	\noindent Further, using (\ref{eq37}), (\ref{eq32}) and (\ref{eq68}), above equation becomes :
	\[0=-\phi\bar{\nabla}_{X}Y+\eta(Y)\phi\bar{\nabla}_{X}\nu+\bar{\nabla}^{*}_{X}\phi Y+\bar{g}(\phi Y,\bar{\nabla}_{X}\nu)\nu\]
	\[=-\phi\bar{\nabla}_{X}Y-\eta(Y)\phi X+\bar{\nabla}^{*}_{X}\phi Y+\bar{g}(\phi X,Y)\nu\]
	
	\noindent which implies \begin{equation}\label{eq69}
		\bar{\nabla}^{*}_{X}\phi Y - \phi\bar{\nabla}_{X}Y = \eta(Y)\phi X - \bar{g}(\phi X, Y)\nu.
	\end{equation}
	
	\noindent Hence, using (\ref{eq67}) and (\ref{eq69}) respectively, we get
	\[
	(\widehat{\bar{\nabla}}_{X}\phi) Y-\eta(Y)\phi X+\bar{g}(\phi X,Y)\nu=K_{X}\phi Y+\phi K_XY 
	\] and
	\[
	(\widehat{\bar{\nabla}}_{X}\phi) Y-\eta(Y)\phi X+\bar{g}(\phi X,Y)\nu=-K_{X}\phi Y-\phi K_XY. \] 
	
\end{proof}

\begin{remark}\label{Re1}
	\cite{ref7} Let $(\bar{g}, \phi, \nu)$ be an indefinite Kenmotsu structure on $\bar{M}$. By setting
	\[
	K(X, Y) = g(X, \nu )g(Y, \nu )\nu
	\]
	for any $X, Y \in \Gamma(T\bar{M})$ such that $K$ satisfies (\ref{eq316}) and (\ref{eq317}), we obtain an indefinite Kenmotsu statistical structure $(\bar{\nabla}^{\lambda} = \widehat{\bar{\nabla}} + \lambda K, \bar{g}, \phi, \nu)$ on $\bar{M}$ 	for $\lambda \in C^{\infty}(\bar{M})$.
\end{remark}

\section{Geodesic contact CR-lightlike submanifolds}

\begin{definition}
	
	\cite{ref9} Let $(M, g, S(TM), S(TM^{\perp}))$ be a lightlike submanifold tangent to the structure vector field $\nu$, immersed in an indefinite Kenmotsu manifold $(\bar{M},\bar{g})$. Then, $M$ is a contact $CR$-lightlike submanifold of $\bar{M}$ if the following conditions are satisfied:
	
\end{definition}

\begin{enumerate}
	\item $RadTM$ is a distribution on $M$ such that $Rad(TM) \cap \phi(Rad(TM)) = \{0\},$
	\item there exist vector bundles $D_{0}$ and $D^{\prime}$ over $M$ such that
	\[
	S(TM) = \{\phi(Rad(TM)) \oplus D^{\prime}\} \perp D_{o} \perp \{\nu\}, 
	\]
	\[
	\phi D_{o} = D_{o}, \hspace{.2in} \phi(D^{\prime}) = L_{1} \perp ltr(TM)
	\]
\end{enumerate}

\noindent where $D_{o}$ is non-degenerate and $L_{1}$ is a vector subbundle of $S(TM^{\perp})$.\\

\noindent Thus, one has the following decomposition:
\begin{equation}\label{eq42b}
	TM = D \oplus D^{\prime} \perp \{\nu\}, \hspace{.2in} D = Rad(TM) \perp \phi(Rad(TM)) \perp D_{o}. 
\end{equation}

\noindent Also, a contact $CR$-lightlike submanifold is proper if $D_o \ne \{0\}$ and $L_1\ne \{0\}$. \\

\noindent Let, the orthogonal complement subbundle to the vector subbundle $L_{1}$ in $S(TM^{\perp})$ be denoted by $L_{1}^{\perp}$. For a contact $CR$-lightlike submanifold $M$, we put
\begin{equation}\label{eq45b}
	\phi X = fX + wX, \quad \forall\; X \in \Gamma(TM),
\end{equation}
where $fX \in \Gamma(D)$ and $wX \in \Gamma(L_{1} \perp ltr(TM))$. Similarly, we have
\begin{equation}\label{eq46b}
	\phi W = BW + CW, \quad \forall\; W \in \Gamma(S(TM^{\perp})),
\end{equation}
where $BW \in \Gamma(\phi L_{1})$ and $CW \in \Gamma(L_{1}^{\perp})$.\\ 

\noindent 	Inspired by \cite{ref9}, the geometry of contact $CR$-lightlike submanifold of an indefinite Kenmotsu statistical manifold is detailed with an example as follows.

\begin{example}\label{example}
	Consider a semi-Euclidean space $\bar{M}=(\mathbb{R}^{9}_{2},\bar{g})$, with canonical basis 
	\[
	\{\partial x_{1}, \partial x_{2}, \partial x_{3}, \partial x_{4}, \partial y_{1}, \partial y_{2}, \partial y_{3}, \partial y_{4}, \partial z\}
	\]
	where $\bar{g}$ is of signature $(-,+,+,+,-,+,+,+,+)$.  \\
	
	\noindent The degenerate structure of an indefinite Kenmotsu manifold $(\mathbb{R}^{9}_{2},\phi_{o}, \nu,\eta,\bar{g})$ for cartesian coordinates $(x^{i};y^{i};z)$ is given as:
	\[		
	\eta=dz, \qquad \nu=\partial z,
	\]
	\[
	\bar{g} = \eta \otimes \eta+e^{2z}(-dx^{1}\otimes dx^{1}-dy^{1}\otimes dy^{1}+dx^{3}\otimes dx^{3}+dy^{3}\otimes dy^{3}+dx^{4}\otimes dx^{4}+dy^{4}\otimes dy^{4}),
	\]
	\[
	\phi_{o}(\sum_{i=1}^{4}(X_{i}\partial x^{i}+Y_{i}\partial y^{i} )+Z\partial z)=\sum_{i=1}^{4}(Y_{i}\partial x^{i}-X_{i}\partial y_{i}).
	\]
	
	\noindent If we set $K(X, Y) = g(X, \nu )g(Y, \nu )\nu$ with $\lambda=1$, then $(\bar{\nabla} = \widehat{\bar{\nabla}} + K, \phi_{o}, \nu, \eta, \bar{g})$ defines an indefinite Kenmotsu statistical structure on $\bar{M}$ by Remark (\ref{Re1}).\\
	
	\noindent Let $M$ be a submanifold of $\mathbb{R}^{9}_{2}$ defined by
	\[
	x^{1}=y^{4}, \quad x^{2}=\sqrt{1-(y^{2})^{2}}, \quad y^{2} \neq \pm 1,
	\]
	
	\noindent Thus, the local frame of $TM$ is as below:
	\begin{equation}\label{eq43x}
		l_{1}=e^{-z}(\partial x_{1}+\partial y_{4}), \quad l_{2}=e^{-z}(\partial x_{4}-\partial y_{1}),
	\end{equation}
	\begin{equation}\label{eq44y}
		l_{3}=e^{-z}\partial x_{3}, \quad l_{4}=e^{-z}\partial y_{3}, \quad l_{5}=e^{-z}(-\frac{y^{2}}{x^{2}}\partial x_{2}+ \partial y_{2}),
	\end{equation}
	\begin{equation}\label{eq45z}
		l_{6}=e^{-z}(\partial x_{4}+\partial y_{1}), \quad l_{7}=\nu=\partial z.
	\end{equation}
	
	\noindent Using (\ref{eq43x}), (\ref{eq44y}) and (\ref{eq45z}), we  have $Rad(TM)=span\{{l_{1}}\},  \; \phi_{o}(Rad (TM)) = span \{{l_{2}}\}$ and
	$Rad(TM) \cup \phi_{o} (Rad(TM))=\{0\}$. Also, $\phi_{o}(l_{3})=-l_{4}$ and $\phi_{o}(l_{4})=l_{3}$, which implies that $\phi_{o} D_{o} = D_{o}$, where $D_{o}=\{l_{3},l_{4}\}$.\\
	
	\noindent Further, the screen transversal vector bundle and lightlike transversal vector bundle are given by\\
	$S(TM^{\perp})=span \{W=e^{-z}(\partial x_{2} +\dfrac{y^{2}}{x^{2}}\partial y_{2})$
	where $\phi_{o}(W)=-l_{5}$ and\\ 
	$ltr(TM) = span  \{ N=\dfrac{e^{-z}}{2}(-\partial x_{1}+\partial y_{4})$ where $\phi_{o}(N)=\dfrac{1}{2}l_{6}$, respectively which gives $D^{'}= span\{\phi_{o} N, \phi_{o} W\}$.\\
	
	\noindent Therefore, $M$ becomes a contact $CR$-lightlike submanifold of the indefinite Kenmotsu statistical manifold $(\mathbb{R}^9_2,\widehat{\bar{\nabla}} + K,\phi_{o},\nu,\eta,\bar{g})$.
\end{example}

\begin{theorem}
	For a contact $CR$-lightlike submanifold $M$ of an indefinite Kenmotsu statistical manifold $\bar{M}$, $D\perp\{\nu\}$ is a totally geodesic foliation iff
	\[
	\bar{g}({h^{*}}^{l}(X,\phi Y),\xi)+\bar{g}(h^{l}(X,\phi Y),\xi)=0
	\] 
	\[
	\bar{g}({h^{*}}^{s}(X,Y),\phi W)+\bar{g}(h^{s}(X,Y),\phi W)=0
	\]
	
	\noindent for all $X,Y \in \Gamma(D\perp\{\nu\})$ and $W \in \Gamma(\phi L_{1})$.
\end{theorem}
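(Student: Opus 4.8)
The plan is to reduce the assertion to autoparallelism of $D\perp\{\nu\}$ with respect to the connection $\widehat{\nabla}=\tfrac12(\nabla+\nabla^{*})$ induced on $M$ by the ambient Levi-Civita connection $\widehat{\bar\nabla}=\tfrac12(\bar\nabla+\bar\nabla^{*})$. Since the forms $\tfrac12(h^{l}+h^{*l})$ and $\tfrac12(h^{s}+h^{*s})$ are symmetric, $\widehat{\nabla}$ is torsion-free, so $D\perp\{\nu\}$ is a totally geodesic foliation exactly when $\widehat{\nabla}_{X}Y\in\Gamma(D\perp\{\nu\})$ for all $X,Y\in\Gamma(D\perp\{\nu\})$: that one condition delivers both the integrability of $D\perp\{\nu\}$ and the total geodesicity of its leaves. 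By \eqref{eq42b} the distribution complementary to $D\perp\{\nu\}$ in $TM$ is $D^{\prime}$, and from the contact $CR$ axioms $\phi(D^{\prime})=L_{1}\perp ltr(TM)$, whence $D^{\prime}=\phi(ltr(TM))\oplus\phi(L_{1})$. Thus the task is to detect precisely when $\widehat{\nabla}_{X}Y$ has vanishing $\phi(ltr(TM))$- and $\phi(L_{1})$-components.

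For the detection step I would use that $\phi$ is $\bar g$-skew, that $\phi^{2}=-\mathrm{Id}$ on $D$, on $L_{1}$ and on $ltr(TM)$ (because $\eta$ vanishes on these), the pairings $\bar g(\xi_{i},N_{j})=\delta_{ij}$, $\bar g(ltr(TM),S(TM))=0$, $\bar g(S(TM^{\perp}),TM)=0$, and the non-degeneracy of the relevant screen bundles. Together these show that a tangent vector $Z$ lies in $\Gamma(D\perp\{\nu\})$ if and only if $\bar g(Z,\phi\xi)=0$ for every $\xi\in\Gamma(Rad(TM))$ and $\bar g(Z,W)=0$ for every $W\in\Gamma(\phi L_{1})$ — pairing against $\phi(Rad(TM))$ reads off the $\phi(ltr(TM))$-component, pairing against $\phi(L_{1})$ reads off the $\phi(L_{1})$-component. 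Hence it suffices to compute $\bar g(\widehat{\nabla}_{X}Y,\phi\xi)$ and $\bar g(\widehat{\nabla}_{X}Y,W)$ for $X,Y\in\Gamma(D\perp\{\nu\})$.

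Now the computation. Since $\phi$ preserves $D$ and kills $\nu$, $\phi Y\in\Gamma(D)\subset\Gamma(TM)$ for $Y\in\Gamma(D\perp\{\nu\})$, so the Gauss formulae \eqref{eq38} legitimately apply to $\phi Y$. As $\phi\xi$ and $W$ lie in $S(TM)$ they annihilate the transversal terms of $\widehat{\bar\nabla}_{X}Y=\widehat{\nabla}_{X}Y+\tfrac12(h^{l}+h^{*l})(X,Y)+\tfrac12(h^{s}+h^{*s})(X,Y)$, so $\bar g(\widehat{\nabla}_{X}Y,\phi\xi)=\bar g(\widehat{\bar\nabla}_{X}Y,\phi\xi)=-\bar g(\phi\widehat{\bar\nabla}_{X}Y,\xi)$, and likewise against $W$. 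By the Kenmotsu relation \eqref{eq33}, $\phi\widehat{\bar\nabla}_{X}Y=\widehat{\bar\nabla}_{X}\phi Y-\eta(Y)\phi X+\bar g(\phi X,Y)\nu$; the last two summands drop after pairing with $\xi$ (resp. with $W$), since $\phi X\in\Gamma(D)$ forces $\bar g(\phi X,\xi)=\bar g(\phi X,W)=0$ while $\eta(\xi)=\eta(W)=0$. Expanding $\widehat{\bar\nabla}_{X}\phi Y$ by \eqref{eq38} and retaining only the surviving transversal contributions gives
\[
\bar g(\widehat{\nabla}_{X}Y,\phi\xi)=-\tfrac12\big(\bar g(h^{l}(X,\phi Y),\xi)+\bar g(h^{*l}(X,\phi Y),\xi)\big),
\]
and the analogous identity with $W$, in which the screen second fundamental forms $h^{s},h^{*s}$ take the place of $h^{l},h^{*l}$; both collapse to exactly the two displayed relations. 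The "only if" direction is this same chain of equalities run backwards, invoking the non-degeneracy of the $Rad(TM)$--$ltr(TM)$ pairing and of $S(TM^{\perp})$ to conclude that the vanishing of all these $\bar g$-products forces the $D^{\prime}$-component of $\widehat{\nabla}_{X}Y$ to be zero, hence $\widehat{\nabla}_{X}Y\in\Gamma(D\perp\{\nu\})$.

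The main obstacle is not any single estimate but the bookkeeping: isolating the correct test fields $\phi(Rad(TM))$ and $\phi(L_{1})$ for reading off the $D^{\prime}$-component, keeping straight which of the many $\bar g$-orthogonality relations among $Rad(TM)$, $ltr(TM)$, $S(TM)$ and $S(TM^{\perp})$ kill each unwanted term, and verifying that $\phi Y$ remains tangent so that the Gauss formulae may be applied. Once these are in place, the Kenmotsu identity \eqref{eq33} (equivalently \eqref{eq67}, \eqref{eq69}) carries out the rest in essentially one line.
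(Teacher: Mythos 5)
Your proposal is correct and follows essentially the same route as the paper: reduce the totally geodesic foliation condition to the vanishing of $\bar g(\widehat{\nabla}_X Y,\phi\xi)$ and $\bar g(\widehat{\nabla}_X Y,W)$ for $W\in\Gamma(\phi L_1)$, then move $\phi$ across via the Kenmotsu (statistical) identity and read off the transversal parts with the Gauss formulae. The only cosmetic difference is that your second condition comes out as $\bar g\bigl((h^{s}+h^{*s})(X,\phi Y),\phi W\bigr)=0$ rather than with $Y$ in place of $\phi Y$; since $\phi$ is an automorphism of $D$ and $h^{s}(X,\nu)=h^{*s}(X,\nu)=0$ by \eqref{eq68}, this is equivalent to the stated form.
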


\begin{proof}
	$D\perp\{\nu\}$ defines a totally geodesic foliation iff
	\[
	\bar{g}(\widehat{\nabla}_{X}Y,\phi\xi)=\bar{g}(\widehat{\nabla}_{X}Y,W)=0\]
	for $X,Y \in \Gamma(D\perp\{\nu\})$ and $W \in \Gamma(\phi L_{1})$. Now from the concept of indefinite Kenmotsu statistical manifold and the equations (\ref{eq38}) and (\ref{eq32}), we have\\
	\noindent	$\bar{g}(\widehat{\nabla}_{X}Y,\phi\xi)= \bar{g}(\widehat{\bar{\nabla}}_{X}Y,\phi\xi)\\
	=\frac{1}{2}[-\bar{g}(\phi\bar{\nabla}_{X}Y,\xi)-\bar{g}(\phi\bar{\nabla}_{X}^{*}Y,\xi)]\\
	=-\frac{1}{2}[\bar{g}(\bar{\nabla}^{*}_{X}\phi Y,\xi)+\bar{g}(\bar{\nabla}_{X}\phi Y,\xi)]\\
	=-\frac{1}{2}[\bar{g}({h^{*}}^{l}(X,\phi Y),\xi)+\bar{g}(h^{l}(X,\phi Y),\xi)]$\\
	
	\noindent And	$\bar{g}(\widehat{\nabla}_{X}\phi Y,W)=\frac{1}{2}[\bar{g}(\bar{\nabla}_{X}\phi Y, W)+\bar{g}(\bar{\nabla}^{*}_{X}\phi Y, W)]\\
	=\frac{1}{2}[\bar{g}(\phi\bar{\nabla}^{*}_{X}Y+\eta(Y)\phi X+\bar{g}(\phi X,Y)\nu,W)+\bar{g}(\phi\bar{\nabla}_{X}Y+\eta(Y)\phi X+\bar{g}(\phi X, Y)\nu,W)]\\
	=-\frac{1}{2}[\bar{g}(\bar{\nabla}^{*}_{X}Y,\phi W)+\bar{g}(\bar{\nabla}_{X}Y,\phi W)]\\
	=-\frac{1}{2}[\bar{g}({h^{*}}^{s}(X,Y),\phi W)+\bar{g}(h^{s}(X,Y),\phi W)]$.\\
	
	\noindent	which proves the assertion. 
\end{proof}

\begin{theorem}
	If $M$ is a contact $CR$-lightlike submanifold of an indefinite Kenmotsu statistical manifold $(\bar{M},\bar{\nabla},\bar{\nabla}^{*},\bar{g},\phi,\nu)$, then $D$ is integrable if and only if
	\[
	{h^{*}}^{s}(X,\phi Y)+h^{s}(X,\phi Y)={h^{*}}^{s}(Y,\phi X)+h^{s}(Y,\phi X) \quad \forall \; X,Y \in \Gamma(D).
	\]
\end{theorem}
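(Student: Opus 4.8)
The plan is to translate integrability of $D$ into the vanishing of the $S(TM^{\perp})$-valued component of $\phi[X,Y]$ and then compute that component via the twin Kenmotsu--statistical identities \eqref{eq67}, \eqref{eq69} together with the Gauss formulae \eqref{eq38}.

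First, fix $X,Y\in\Gamma(D)$; by definition $D$ is integrable iff $[X,Y]\in\Gamma(D)$. From $\widehat{\bar\nabla}_{X}\nu=-X+\eta(X)\nu$ (see \eqref{eq33}) one gets $\eta([X,Y])=X\eta(Y)-Y\eta(X)=0$, since $\eta$ vanishes on $D$; hence, by \eqref{eq42b}, $[X,Y]\in\Gamma(D)$ iff $[X,Y]$ has no $D'$-component. Now, using \eqref{eq32} and the description of $D$ in \eqref{eq42b} (and $\eta=0$ on $Rad(TM)$), $\phi$ carries $D$ onto $D$, while $\phi(D')=L_{1}\perp ltr(TM)$ meets $TM$ only in the zero section. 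Therefore $[X,Y]\in\Gamma(D)$ precisely when $\phi[X,Y]\in\Gamma(TM)$, i.e. when both the $ltr(TM)$- and the $S(TM^{\perp})$-valued parts of $\phi[X,Y]$ vanish; the screen-transversal part (which in fact takes values in $L_{1}=\phi(\text{the relevant part of }D')$) is the one recorded in the statement.

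To extract that part, I would compute $\phi[X,Y]$ in two ways. Writing $[X,Y]=\bar\nabla_{X}Y-\bar\nabla_{Y}X$, solving \eqref{eq67} for $\phi\bar\nabla^{*}_{X}Y$ with $\eta(X)=\eta(Y)=0$, and using that $\bar\nabla^{*}$ is also torsion free, gives
\[
\phi[X,Y]=\bar\nabla_{X}\phi Y-\bar\nabla_{Y}\phi X+2\bar g(\phi X,Y)\nu ;
\]
interchanging the roles of $\bar\nabla$ and $\bar\nabla^{*}$ and using \eqref{eq69} in place of \eqref{eq67} gives
\[
\phi[X,Y]=\bar\nabla^{*}_{X}\phi Y-\bar\nabla^{*}_{Y}\phi X+2\bar g(\phi X,Y)\nu .
\]
Since $\phi X,\phi Y\in\Gamma(D)\subseteq\Gamma(TM)$, substituting the Gauss formulae \eqref{eq38} for $\bar\nabla$ and for $\bar\nabla^{*}$ into these two identities, averaging, and reading off the $S(TM^{\perp})$-component shows that this component equals
\[
\tfrac{1}{2}\bigl(h^{s}(X,\phi Y)+h^{*s}(X,\phi Y)\bigr)-\tfrac{1}{2}\bigl(h^{s}(Y,\phi X)+h^{*s}(Y,\phi X)\bigr).
\]
Hence, if $D$ is integrable, this vanishes, which is exactly ${h^{*}}^{s}(X,\phi Y)+h^{s}(X,\phi Y)={h^{*}}^{s}(Y,\phi X)+h^{s}(Y,\phi X)$; conversely, the displayed relation forces the $S(TM^{\perp})$-part of $\phi[X,Y]$ to vanish, and a parallel computation of the $ltr(TM)$-part (again from \eqref{eq38}, \eqref{eq67}, \eqref{eq69}) yields $\phi[X,Y]\in\Gamma(TM)$, so $[X,Y]\in\Gamma(D)$.

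The step I expect to be the main obstacle, and the one deserving the most care, is the reduction in the second paragraph: one must confirm cleanly that $\phi$ stabilises $D$, that $\phi(D')$ is genuinely transversal to $TM$, and — for the converse — that the bracket's obstruction, which a priori lives in $L_{1}\perp ltr(TM)$, is faithfully detected by these two projections; once that is settled, the remainder is routine bookkeeping with \eqref{eq38}, \eqref{eq67}, \eqref{eq69} and the skew-symmetry of $\phi$ from \eqref{eq32}.
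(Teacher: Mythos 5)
Your computational core is the same as the paper's: everything reduces to the identity $\phi\bar\nabla_{X}Y=\bar\nabla^{*}_{X}\phi Y+\bar g(\phi X,Y)\nu$ (and its dual) for $X,Y\in\Gamma(D)$, followed by the Gauss formulae \eqref{eq38}; your two displayed expressions for $\phi[X,Y]$ are correct, as is the observation that $\eta([X,Y])=0$ holds automatically. The difference is organisational rather than substantive: the paper tests integrability through the pairings $\bar g([X,Y],\nu)$ and $\bar g([X,Y],W)$ for $W\in\Gamma(D^{\prime})$, whereas you decompose $\phi[X,Y]$ against $T\bar M\mid_{M}=TM\oplus tr(TM)$. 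Your forward direction (integrability implies the stated symmetry of $h^{s}+h^{*s}$) is sound.

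The converse, however, has a genuine gap, and it sits exactly where you flagged the main obstacle. Your own reduction shows that $[X,Y]\in\Gamma(D)$ iff \emph{both} transversal components of $\phi[X,Y]$ vanish. Reading off the $ltr(TM)$-component from your identities gives $h^{l}(X,\phi Y)-h^{l}(Y,\phi X)$ (equivalently $h^{*l}(X,\phi Y)-h^{*l}(Y,\phi X)$), and nothing in the hypothesis forces this to vanish: the displayed condition only kills the $L_{1}$-valued part. So the ``parallel computation of the $ltr(TM)$-part'' you defer to does not yield $\phi[X,Y]\in\Gamma(TM)$; it yields a second, independent condition on $h^{l}$ that would have to be added to the statement. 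The paper's own proof has the same defect, only less visibly: since $\eta$ vanishes on $ltr(TM)$, the subbundle $\phi(ltr(TM))\subset D^{\prime}$ is null and orthogonal to all of $D^{\prime}$ (it lies in the radical of $\bar g$ restricted to $D^{\prime}$), so the criterion ``$\bar g([X,Y],W)=0$ for all $W\in\Gamma(D^{\prime})$'' cannot detect the $\phi(ltr(TM))$-component of $[X,Y]$; detecting it requires pairing with $\phi(Rad(TM))\subset D$, which produces precisely the $h^{l}$ condition your decomposition surfaces. In short, your route is the more transparent one, but carried to completion it proves a corrected statement containing both an $h^{s}$- and an $h^{l}$-symmetry condition, and only the forward implication of the theorem as literally stated.
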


\begin{proof}
	$D$ is integrable if and only if 
	\[
	\bar{g}([X,Y],\nu)=0, \quad \bar{g}([X,Y],W)=0 \quad \forall \; X,Y \in \Gamma(D), \; W \in \Gamma (D^{\prime}).
	\]
	Since $\bar{M}$ is an indefinite Kenmotsu statistical manifold and (\ref{eq37}), (\ref{eq38}), (\ref{eq31}), (\ref{eq42b}) hold, therefore, we have $\bar{g}([X,Y],\nu)=0$\\
	
	\noindent	Also $\bar{g}([X,Y],W)=\frac{1}{2}[\bar{g}(\bar{\nabla}_{X}Y,W)+\bar{g}(\bar{\nabla}^{*}_{X}Y,W)-\bar{g}(\bar{\nabla}_{Y}X,W)-\bar{g}(\bar{\nabla}^{*}_{Y}X,W)]\\
	=\frac{1}{2}[\bar{g}(\bar{\nabla}^{*}_{X}\phi Y+\bar{g}(\phi X,Y)\nu,\phi W)+\bar{g}(\bar{\nabla}_{X}\phi Y+\bar{g}(\phi X,Y)\nu,\phi W)\\-\bar{g}(\bar{\nabla}^{*}_{Y}\phi X+\bar{g}(\phi Y,X)\nu,\phi W)-\bar{g}(\bar{\nabla}_{Y}\phi X+\bar{g}(\phi Y,X)\nu,\phi W)]\\
	=\frac{1}{2}[\bar{g}({h^{*}}^{s}(X,\phi Y),\phi W)+\bar{g}(h^{s}(X,\phi Y),\phi W)-\bar{g}({h^{*}}^{s}(Y,\phi X),\phi W)-\bar{g}(h^{s}(Y,\phi X),\phi W)].$\\
	
	\noindent Hence, we obtain the desired result using hypothesis. 
	
\end{proof}

\begin{theorem}
	If $\bar{M}$ is an indefinite Kenmotsu statistical manifold and $M$ is a contact $CR$-lightlike submanifold of $\bar{M}$, then $D^{\prime}$ does not define a totally geodesic foliation.
\end{theorem}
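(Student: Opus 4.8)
The plan is to show that $D'$ cannot be a totally geodesic foliation by exhibiting a specific obstruction: if it were, then for all $X, Y \in \Gamma(D')$ the covariant derivative $\widehat{\bar\nabla}_X Y$ (equivalently the symmetrized connection) would have no component along $D_0$, along $\{\nu\}$, or along $Rad(TM) \perp \phi(Rad(TM))$. I would test this against the $\nu$-component, since $D'$ is not tangent to $\nu$ and the Kenmotsu structure forces a nonzero normal-type term there. Concretely, a foliation by $D'$ being totally geodesic means $\widehat{\bar\nabla}_X Y \in \Gamma(D')$ for all $X,Y \in \Gamma(D')$; I would instead directly compute $\bar g(\widehat{\bar\nabla}_X Y, \nu) = \bar g(\widehat{\bar\nabla}_X Y, \nu)$ and show it cannot vanish identically.

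The key computational step uses $\phi$ and the Kenmotsu identities. Recall from (\ref{eq32}) that $\bar g(\phi X, Y) + \bar g(X, \phi Y) = 0$, and from the structure of a contact $CR$-lightlike submanifold that for $X, Y \in \Gamma(D')$ we have $\phi X, \phi Y \in \Gamma(L_1 \perp ltr(TM))$, i.e.\ $\phi$ maps $D'$ into the transversal bundle. The idea is to write, using (\ref{eq37}) and the Kenmotsu condition in the form (\ref{eq67})--(\ref{eq69}),
\[
\bar g(\widehat{\bar\nabla}_X Y, \nu) = \tfrac12\bigl[\bar g(\bar\nabla_X Y, \nu) + \bar g(\bar\nabla_X^* Y, \nu)\bigr],
\]
and then relate $\bar g(\bar\nabla_X Y, \nu)$ to $\bar g(\phi X, \phi Y)$ by applying $\phi$-compatibility and $\widehat{\bar\nabla}_X \nu = -X + \eta(X)\nu$. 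Differentiating $\bar g(Y,\nu) = \eta(Y) = 0$ along $X$ (valid since $Y \in \Gamma(D')$ has no $\nu$-component) gives $\bar g(\bar\nabla_X Y, \nu) + \bar g(Y, \bar\nabla_X^* \nu) = 0$, and using (\ref{eq68}) one gets $\bar g(\bar\nabla_X Y, \nu) = \bar g(Y, X) - \{\eta(X)+\mu(X)\}\eta(Y) = \bar g(X,Y)$. Symmetrically $\bar g(\bar\nabla_X^* Y, \nu) = \bar g(X,Y)$, whence $\bar g(\widehat{\bar\nabla}_X Y, \nu) = \bar g(X,Y)$.

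Now the obstruction is clear: if $D'$ were a totally geodesic foliation, this would force $\bar g(X,Y) = 0$ for all $X, Y \in \Gamma(D')$, i.e.\ $D'$ would be a totally lightlike (null) distribution. But $D' = \phi(Rad(TM))^{\text{-complement}}$ contains $\phi N$ for $N \in \Gamma(ltr(TM))$ and $\phi W$ for $W \in \Gamma(L_1)$; in particular $\phi W \in \Gamma(L_1)$ is a nonzero section of the nondegenerate screen transversal piece, so $\bar g(\phi W, \phi W) = \bar g(W,W) \neq 0$ by (\ref{eq31}) (since $W$ is spacelike or timelike and $\eta(W) = 0$). Taking $X = Y = \phi W$ gives $\bar g(\widehat{\bar\nabla}_{\phi W}\phi W, \nu) = \bar g(W,W) \neq 0$, contradicting total geodesicity. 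Hence $D'$ does not define a totally geodesic foliation.

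The main obstacle I anticipate is bookkeeping the connection components correctly: one must be careful that for $X,Y \in \Gamma(D')$ the decomposition (\ref{eq38}) is applied with the right pieces and that differentiating $\eta(Y)=0$ is legitimate (it is, because $D' \perp \{\nu\}$ as a sub-bundle of $TM$). A secondary subtlety is making sure the $\mu(X)\eta(Y)$ term genuinely drops out — it does since $\eta(Y)=0$ — and that nondegeneracy of $L_1$ (part of the definition of a proper contact $CR$-lightlike submanifold, $L_1 \ne \{0\}$ with $L_1 \subseteq S(TM^\perp)$ nondegenerate) is what supplies the needed nonzero length vector. Once these are in place the contradiction is immediate and no delicate estimate is required.
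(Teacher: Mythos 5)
Your proposal is correct and follows essentially the same route as the paper: both reduce the question to the $\nu$-component and compute $\bar g(\widehat{\bar\nabla}_X Y,\nu)=\bar g(X,Y)$ for $X,Y\in\Gamma(D')$ via (\ref{eq37}) and (\ref{eq68}), concluding that total geodesicity would force $D'$ to be totally null. Your version is in fact slightly tighter, since you exhibit the explicit non-null section $\phi W$ with $W\in\Gamma(L_1)$, $\bar g(W,W)\neq 0$, where the paper merely asserts $\bar g(W,Z)\neq 0$.
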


\begin{proof}
	$D^{\prime}$ defines a toatlly geodesic foliation iff 
	\begin{equation}\label{eq76}
		\bar{g}(\widehat{\nabla}_{Z}W,N)=\bar{g}(\widehat{\nabla}_{Z}W,\phi N)=\bar{g}(\widehat{\nabla}_{Z}W,X)=\bar{g}(\widehat{\nabla}_{Z}W,\nu)=0
	\end{equation}
	
	\noindent	where $Z,W \in \Gamma(D^{\prime})$, $N\in \Gamma(ltr(TM))$ and $X \in \Gamma(D_{0})$.\\
	
	\noindent	Therefore, using (\ref{eq37}), (\ref{eq68}), we obtain\\
	
	\noindent	$\bar{g}(\widehat{\nabla}_{Z}W,\nu)=\bar{g}(\widehat{\bar{\nabla}}_{Z}W,\nu)\\
	=-\frac{1}{2}[\bar{g}(W,-Z+\eta(Z)\nu+\eta(\bar{\nabla}_{Z}^{*}\nu)\nu)+\bar{g}(W,-Z+\eta(Z)\nu+\eta(\bar{\nabla}_{Z}\nu)\nu)]\\
	=\bar{g}(W,Z)$	\\
	
	\noindent And $\bar{g}(\widehat{\nabla}_{Z}W,X)=\frac{1}{2}[\bar{g}(\phi\bar{\nabla}_{Z}W,\phi X)+\bar{g}(\phi\bar{\nabla}^{*}_{Z}W,\phi X)]\\
	=\frac{1}{2}[\bar{g}(\bar{\nabla}^{*}_{Z}\phi W-\eta(W)\eta(\phi Z)+\bar{g}(\phi Z,W)\nu,\phi X)+\bar{g}(\bar{\nabla}_{Z}\phi W-\eta(W)\eta(\phi Z)\\+\bar{g}(\phi Z,W),\phi X)]\\
	=\frac{1}{2}[\bar{g}(-A^{*}_{\phi W}Z,\phi X)+\bar{g}(-A_{\phi W}Z,\phi X)]$\\
	
	\noindent Since $\bar{g}(W,Z)\ne0$ and $\bar{g}(-A^{*}_{\phi W}Z,\phi X)+\bar{g}(-A_{\phi W}Z,\phi X)\ne0$, therefore $D^{'}$ does not define a totally geodesic foliation using (\ref{eq76}).
	
\end{proof}

\begin{theorem}
	For a contact $CR$-lightlike submanifold $M$ of an indefinite Kenmotsu statistical manifold $(\bar{M},\bar{\nabla},\bar{\nabla}^{*},\bar{g},\phi,\nu)$, $D\perp\{\nu\}$ is integrable if and only if
	\[
	h(X,\phi Y)+h^{*}(X,\phi Y)=h(Y,\phi X)+h^{*}(Y,\phi X)
	\]
	
	\noindent 	for all $X,Y \in D\perp\{\nu\}$.
	
\end{theorem}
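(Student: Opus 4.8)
The plan is to reduce integrability to two scalar identities and then expand them with the Gauss formulas. Since $\bar\nabla$ and $\bar\nabla^{*}$ are torsion free, $[X,Y]=\bar\nabla_{X}Y-\bar\nabla_{Y}X=\bar\nabla^{*}_{X}Y-\bar\nabla^{*}_{Y}X\in\Gamma(TM)$, so by (\ref{eq42b}) the distribution $D\perp\{\nu\}$ is integrable iff the $D^{\prime}$-component of $[X,Y]$ vanishes for all $X,Y\in\Gamma(D\perp\{\nu\})$. Because $TM=(D\perp\{\nu\})\oplus D^{\prime}$ is not $\bar g$-orthogonal, I would detect this component not by a direct metric projection but by pairing against $\phi$ applied to transversal sections. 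Using $\phi\nu=0$, $\phi D_{o}=D_{o}$, $\phi(D^{\prime})=L_{1}\perp ltr(TM)$ together with (\ref{eq31})--(\ref{eq32}) and the bundle decompositions, a routine check shows $\bar g(D\perp\{\nu\},\phi\xi)=0=\bar g(D\perp\{\nu\},\phi W)$ while $\bar g(\cdot,\phi\xi)$ is nondegenerate on $\phi(ltr(TM))$ and $\bar g(\cdot,\phi W)$ is nondegenerate on $\phi(L_{1})$, for $\xi\in\Gamma(Rad(TM))$ and $W\in\Gamma(S(TM^{\perp}))$; since $D^{\prime}=\phi(ltr(TM))\oplus\phi(L_{1})$, it follows that $D\perp\{\nu\}$ is integrable if and only if $\bar g([X,Y],\phi\xi)=0$ and $\bar g([X,Y],\phi W)=0$ for all such $X,Y,\xi,W$.

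Next I would compute these pairings. Write $[X,Y]=\tfrac{1}{2}(\bar\nabla_{X}Y-\bar\nabla_{Y}X)+\tfrac{1}{2}(\bar\nabla^{*}_{X}Y-\bar\nabla^{*}_{Y}X)$; moving $\phi$ across by (\ref{eq32}) and substituting $\phi\bar\nabla_{X}Y=\bar\nabla^{*}_{X}\phi Y-\eta(Y)\phi X+\bar g(\phi X,Y)\nu$ and $\phi\bar\nabla^{*}_{X}Y=\bar\nabla_{X}\phi Y-\eta(Y)\phi X+\bar g(\phi X,Y)\nu$ (these are (\ref{eq69}) and (\ref{eq67}) rearranged), the terms in $\phi X$ and $\nu$ are killed on pairing with $\xi$ or $W$ (they are tangent, while $\xi,W\in\Gamma(TM^{\perp})$). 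By the Gauss formulas (\ref{eq38}), pairing with $\xi$ retains only the $ltr(TM)$-valued parts and pairing with $W$ only the $S(TM^{\perp})$-valued parts, giving
\[
\bar g([X,Y],\phi\xi)=-\tfrac{1}{2}\,\bar g\big(h^{l}(X,\phi Y)+h^{*l}(X,\phi Y)-h^{l}(Y,\phi X)-h^{*l}(Y,\phi X),\xi\big),
\]
\[
\bar g([X,Y],\phi W)=-\tfrac{1}{2}\,\bar g\big(h^{s}(X,\phi Y)+h^{*s}(X,\phi Y)-h^{s}(Y,\phi X)-h^{*s}(Y,\phi X),W\big),
\]
where one uses that $\phi Y\in\Gamma(D)\subset\Gamma(TM)$, so all these second fundamental forms are defined.

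Finally, $\bar g$ restricted to $ltr(TM)\times Rad(TM)$ is nondegenerate and $S(TM^{\perp})$ is nondegenerate, so the first identity vanishes for all $\xi$ iff $h^{l}(X,\phi Y)+h^{*l}(X,\phi Y)=h^{l}(Y,\phi X)+h^{*l}(Y,\phi X)$, and the second vanishes for all $W$ iff $h^{s}(X,\phi Y)+h^{*s}(X,\phi Y)=h^{s}(Y,\phi X)+h^{*s}(Y,\phi X)$. Adding these, and using $h=h^{l}+h^{s}$, $h^{*}=h^{*l}+h^{*s}$, yields $h(X,\phi Y)+h^{*}(X,\phi Y)=h(Y,\phi X)+h^{*}(Y,\phi X)$; conversely this identity splits back into the two component identities since $tr(TM)=ltr(TM)\perp S(TM^{\perp})$.

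I expect the first paragraph to be the main obstacle: because the splitting $TM=(D\perp\{\nu\})\oplus D^{\prime}$ is not $\bar g$-orthogonal, one cannot simply pair $[X,Y]$ with $N\in\Gamma(ltr(TM))$ and $W\in\Gamma(S(TM^{\perp}))$, and the correct test sections are $\phi\xi$ and $\phi W$; verifying the annihilation and nondegeneracy statements above from (\ref{eq31})--(\ref{eq32}) and the definition of a contact $CR$-lightlike submanifold is the delicate point. The rest is a routine computation with (\ref{eq67}), (\ref{eq69}) and the Gauss--Weingarten equations, parallel to the preceding theorems.
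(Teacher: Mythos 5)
Your proposal is correct, and it reaches the paper's criterion by a dual route. The paper works with the operators $f,w,B,C$ of (\ref{eq45b})--(\ref{eq46b}): it substitutes the Gauss decomposition (\ref{eq38}) into the rearranged structure equations (\ref{eq67}) and (\ref{eq69}), reads off the transversal components to get $w(\nabla^{*}_{X}Y)=h(X,\phi Y)-C(h^{*s}(X,Y))$ and $w(\nabla_{X}Y)=h^{*}(X,\phi Y)-C(h^{s}(X,Y))$, antisymmetrizes and adds to obtain $2w([X,Y])=h(X,\phi Y)+h^{*}(X,\phi Y)-h(Y,\phi X)-h^{*}(Y,\phi X)$, and concludes from the fact that $\ker w=D\perp\{\nu\}$. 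You instead detect the $D^{\prime}=\phi(ltr(TM))\oplus\phi(L_{1})$ component of $[X,Y]$ by pairing against $\phi\xi$ and $\phi W$, which is exactly the metric dual of the condition $w([X,Y])=0$; your subsequent computation uses the same two identities (\ref{eq67}), (\ref{eq69}) and the same Gauss formulas, so the analytic core is identical. What the paper's version buys is that the bundle-valued identity for $w([X,Y])$ drops out in one stroke, with no need to discuss which pairings are nondegenerate on which summands; what your version buys is uniformity with the other integrability and foliation proofs in the paper (which all test $\bar g([X,Y],\cdot)$ against suitable sections) and avoidance of the auxiliary operators $w$ and $C$, at the cost of the verification you correctly single out as the delicate point, namely that $\bar g(\cdot,\phi\xi)$ and $\bar g(\cdot,\phi W)$ annihilate $D\perp\{\nu\}$ and restrict nondegenerately to $\phi(ltr(TM))$ and $\phi(L_{1})$ respectively; your checks of these facts from (\ref{eq31})--(\ref{eq32}) and the defining decomposition (\ref{eq42b}) are sound.
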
	

\begin{proof}
	Since, $\bar{M}$ is an indefinite Kenmotsu statistical manifold and (\ref{eq38}), (\ref{eq31}), (\ref{eq45b}), (\ref{eq46b}) hold, therefore, on considering
	\[h(X,\phi Y)=\phi\bar{\nabla}_{X}^{*}Y+\eta(Y)\phi X-\bar{g}(\phi X,Y)\nu-\nabla_{X}\phi Y\]
	
	\noindent	we obtain, $w(\nabla_{X}^{*}Y)=h(X,\phi Y)-C({h^{s}}^{*}(X,Y)) \quad \forall \; X,Y \in D\perp\{\nu\}$.\\
	
	\noindent	Similarly, we derive $w(\nabla_{X}Y)=h^{*}(X,\phi Y)-C(h^{s}(X,Y))$.\\
	
	\noindent On interchanging $X$ and $Y$ in the above equations, we get\\
	
	\noindent $ w(\nabla_{Y}^{*}X)=h(Y,\phi X)-C({h^{s}}^{*}(Y,X))$
	and $ w(\nabla_{Y}X)=h^{*}(Y\phi X)-C(h^{s}(Y,X)).$\\
	
	\noindent	Hence, we have $h(X,\phi Y)+h^{*}(X,\phi Y)-h(Y,\phi X)-h^{*}(Y,\phi X)= 2w([X,Y])$.\\
	
	\noindent	Thus, the result follows from the integrability of $D\perp\{\nu\}$.
	
\end{proof}

\section{Geodesic contact SCR-lightlike submanifolds}

\begin{definition}
	\cite{ref9} Let $(M, g, S(TM), S(TM^{\perp}))$ be a lightlike submanifold tangent to the structure vector field $\nu$, immersed in an indefinite Kenmotsu manifold $(\bar{M},\bar{g})$. Then $M$ is a contact $SCR$-lightlike submanifold of $\bar{M}$ if the following conditions hold:
\end{definition}

\begin{enumerate}
	\item There exist real non-empty distributions $D$ and $D^{\perp}$ such that
	\[
	S(TM) = D \perp D^{\perp} \perp \{\nu\}, 
	\]
	\[
	\phi (D^{\perp}) \subset (S(TM^{\perp})), \hspace{.2in} D \cap D^{\perp} = \{0\}	
	\]
	
	where $D^{\perp}$ is orthogonally complementary to $D \perp \{\nu\}$ in $S(TM)$.\\
	\item The distributions $D$ and $Rad(TM)$ are invariant with respect to $\phi$. It follows that $ltr(TM)$ is also invariant with respect to $\phi$. Hence we have the decomposition:
	\begin{equation}\label{eq48}
		TM = \bar{D} \perp D^{\perp} \perp \{\nu\}, \hspace{.2in} \bar{D}= D \perp Rad(TM).
	\end{equation}  
\end{enumerate}

\noindent For any $X\in\Gamma(TM)$ and $W\in \Gamma(S(TM^{\perp}))$, we put
\begin{equation}\label{eq47}
	\phi X=P^{\prime}X+F^{\prime}X, \qquad \phi W=B^{\prime}W+C^{\prime}W,
\end{equation}
where $P^{\prime}X\in \Gamma(\bar{D}), \; F^{\prime}X\in \Gamma(\phi D^{\perp}), \; B^{\prime}W\in\Gamma(D^{\perp}), \; C^{\prime}W\in \Gamma(\mu^{\prime})$ and $\mu^{\prime}$ denotes the orthogonal complement to $\phi(D^{\perp})$ in $S(TM^{\perp})$.\\

\noindent Now, an example elaborating the structure of an $SCR$-lightlike submanifold of an indefinite Kenmotsu statistical manifold following \cite{ref9} is presented as:

\begin{example}
	Consider the indefinite Kenmotsu statistical manifold $\bar{M}=(\mathbb{R}^9_2,\bar{\nabla} = \widehat{\bar{\nabla}} + K,\phi_{o},\nu,\eta,\bar{g})$ with the structure defined as in example (\ref{example}).\\
	
	\noindent	Now let $M$ be a submanifold of $\bar{M}$ defined by
	\[
	x^1=x^2, \qquad y^1=y^2, \qquad x^4=\sqrt{1-{(y^4)}^2}, \qquad y^4\ne \pm 1.
	\]
	
	\noindent	Then, the local frame of $TM$ corresponding to the submanifold of indefinite Kenmotsu statistical manifold is constructed as under:
	\begin{equation}
		l_1=e^{-z}(\partial x_1+\partial x_2),  \qquad l_2=e^{-z}(\partial y_1+\partial y_2),
	\end{equation} 
	\begin{equation}
		l_3=e^{-z}\partial x_3, \qquad l_4=e^{-z}\partial y_3,
	\end{equation}
	\begin{equation}
		l_5=e^{-z}(-y^4\partial x_4+x^4\partial y_4), \qquad l_6=\nu=\partial z.
	\end{equation}
	
	\noindent Then, $Rad(TM)=span\{l_1,l_2\}$ as $\phi_o(l_1)=-l_2$ and $D=span\{l_3,l_4\}$ as $\phi_o(l_3)=-l_4$.\\
	
	\noindent	Also, $S(TM^\perp)=span\{W=e^{-z}(x^4\partial x_4+y^4\partial y_4)\}$ where $\phi_o(W)=-l_5$\\
	and $ltr(TM)=span\{N_1=\dfrac{e^{-z}}{2}(-\partial x_1+\partial x_2),\; N_2=\dfrac{e^{-z}}{2}(-\partial y_1+\partial y_2)\}$ where $\phi_o(N_2)=N_1$. This shows that $Rad(TM)$, $D$ and $ltr(TM)$ are invariant with respect to $\phi_{o}$.\\
	
	\noindent	Therefore, $M$ is said to be a contact $SCR$-lightlike submanifold of the indefinite Kenmotsu statistical manifold $\bar{M}$.
\end{example}

\begin{theorem}
	For a contact $SCR$-lightlike submanifold $M$ of an indefinite Kenmotsu statistical manifold $\bar{M}$, the induced connections $\nabla \; (respectively \; \nabla^{*})$ are metric connections iff 
	${A^*}^{\prime}_{\xi}X+A^{\prime}_{\xi}X$ and ${h^*}^{s}(X,\xi)+h^{s}(X,\xi)$ have no components in $D$ and $\phi(D^{\perp})$ respectively, for all $ X \in \Gamma(TM), \; \xi \in \Gamma(Rad(TM))$.
\end{theorem}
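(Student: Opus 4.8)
The plan is to detect metricity of $\nabla$ (and of $\nabla^{*}$) through the parallelism of $Rad(TM)$ and to test that against the $SCR$-decomposition (\ref{eq48}), $TM=\bar{D}\perp D^{\perp}\perp\{\nu\}$ with $\bar{D}=D\perp Rad(TM)$. I would start from the standard fact of lightlike geometry \cite{ref5}, \cite{ref8} that $\nabla$ (respectively $\nabla^{*}$) is a metric connection on $M$ if and only if $Rad(TM)$ is parallel with respect to $\nabla$ (respectively $\nabla^{*}$), i.e. $\nabla_{X}\xi\in\Gamma(Rad(TM))$ (respectively $\nabla^{*}_{X}\xi\in\Gamma(Rad(TM))$) for all $X\in\Gamma(TM)$, $\xi\in\Gamma(Rad(TM))$. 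Since $S(TM)=D\perp D^{\perp}\perp\{\nu\}$ is non-degenerate and $\bar{g}$-orthogonal to $Rad(TM)$, this reduces to checking $g(\nabla_{X}\xi,Z)=0$ for $Z$ running over $D$, over $D^{\perp}$, and for $Z=\nu$ (and analogously for $\nabla^{*}$).

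First I would dispose of the $\nu$-direction, which turns out to be vacuous: using $h^{l}(X,\xi),h^{s}(X,\xi)\perp\nu$, the relation (\ref{eq37}), the $\bar{\nabla}^{*}$-analogue of (\ref{eq68}), and $g(\xi,\nu)=g(\xi,X)=0$ for $\xi\in\Gamma(Rad(TM))$, one gets $g(\nabla_{X}\xi,\nu)=\bar{g}(\bar{\nabla}_{X}\xi,\nu)=X\bar{g}(\xi,\nu)-\bar{g}(\xi,\bar{\nabla}^{*}_{X}\nu)=0$, and likewise $g(\nabla^{*}_{X}\xi,\nu)=0$; this is why $\nu$ does not appear in the statement.

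For the $D$-direction I would invoke the identities $\bar{g}(h^{*l}(X,PZ),\xi)=g(A^{\prime}_{\xi}X,PZ)$ and $\bar{g}(h^{l}(X,PZ),\xi)=g(A^{*\prime}_{\xi}X,PZ)$ from Section~2: for $Z\in\Gamma(D)\subset\Gamma(S(TM))$, $g(\nabla_{X}\xi,Z)=\bar{g}(\bar{\nabla}_{X}\xi,Z)=X\bar{g}(\xi,Z)-\bar{g}(\xi,\bar{\nabla}^{*}_{X}Z)=-\bar{g}(\xi,h^{*l}(X,Z))=-g(A^{\prime}_{\xi}X,Z)$, and dually $g(\nabla^{*}_{X}\xi,Z)=-g(A^{*\prime}_{\xi}X,Z)$. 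For the $D^{\perp}$-direction I would bring in $\phi$: since $Rad(TM)$ is $\phi$-invariant, $\phi\xi\in\Gamma(Rad(TM))$, and feeding $Y=\xi$ into (\ref{eq69}) together with $\eta(\xi)=0$ and $\bar{g}(\phi X,\xi)=-g(X,\phi\xi)=0$ yields the key identity $\phi\bar{\nabla}_{X}\xi=\bar{\nabla}^{*}_{X}\phi\xi$ (and, from (\ref{eq67}), the dual $\phi\bar{\nabla}^{*}_{X}\xi=\bar{\nabla}_{X}\phi\xi$). Then for $W^{\prime}\in\Gamma(D^{\perp})$ we have $\phi W^{\prime}\in\Gamma(\phi(D^{\perp}))\subset\Gamma(S(TM^{\perp}))$ and $\eta(W^{\prime})=0$, so $g(\nabla_{X}\xi,W^{\prime})=\bar{g}(\bar{\nabla}_{X}\xi,W^{\prime})=\bar{g}(\phi\bar{\nabla}_{X}\xi,\phi W^{\prime})=\bar{g}(\bar{\nabla}^{*}_{X}\phi\xi,\phi W^{\prime})$; expanding by (\ref{eq38}) and (\ref{eq312}) and discarding the summands $\bar{g}$-orthogonal to $\phi W^{\prime}$ — the $TM$-part $\nabla^{*}_{X}\phi\xi$, the $ltr(TM)$-part $h^{*l}$, and the $\mu^{\prime}$-part of $h^{*s}$ — leaves $g(\nabla_{X}\xi,W^{\prime})=\bar{g}(h^{*s}(X,\phi\xi),\phi W^{\prime})$, and dually $g(\nabla^{*}_{X}\xi,W^{\prime})=\bar{g}(h^{s}(X,\phi\xi),\phi W^{\prime})$.

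Assembling the three directions, $\nabla$ is a metric connection iff $A^{\prime}_{\xi}X$ has no component in $D$ and $h^{*s}(X,\xi)$ has no component in $\phi(D^{\perp})$ (after renaming $\phi\xi\mapsto\xi$, legitimate because $\phi$ is an automorphism of $Rad(TM)$), and $\nabla^{*}$ is a metric connection iff $A^{*\prime}_{\xi}X$ has no component in $D$ and $h^{s}(X,\xi)$ has no component in $\phi(D^{\perp})$; adding the corresponding pairs of equalities gives precisely that ${A^*}^{\prime}_{\xi}X+A^{\prime}_{\xi}X$ has no component in $D$ and ${h^*}^{s}(X,\xi)+h^{s}(X,\xi)$ has no component in $\phi(D^{\perp})$, for all $X\in\Gamma(TM)$, $\xi\in\Gamma(Rad(TM))$, which is the asserted equivalence. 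I expect the delicate step to be the bookkeeping of $\phi$ on the transversal side: $D$, $Rad(TM)$, $ltr(TM)$ and $\mu^{\prime}$ are $\phi$-invariant, while $\phi(D^{\perp})\subset S(TM^{\perp})$ and $\phi$ returns the $\phi(D^{\perp})$-part of a screen-transversal vector to $D^{\perp}\subset S(TM)$ — this is what isolates exactly which component of the second fundamental form is seen; a secondary point is that the $\nu$-direction needs the statistical Kenmotsu identities (\ref{eq68})–(\ref{eq69}) rather than their Levi-Civita versions.
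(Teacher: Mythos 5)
Your proposal is correct in substance and follows essentially the same route as the paper: both arguments rest on the standard fact that the induced connection is metric iff $Rad(TM)$ is parallel, and both exploit the Kenmotsu--statistical intertwining obtained by setting $Y=\xi$ in (\ref{eq67})/(\ref{eq69}), namely $\bar{\nabla}_{X}\phi\xi=\phi\bar{\nabla}^{*}_{X}\xi$ and $\bar{\nabla}^{*}_{X}\phi\xi=\phi\bar{\nabla}_{X}\xi$. The only real difference is bookkeeping: the paper decomposes the vector equation $\nabla_{X}\phi\xi+h^{l}+h^{s}=\phi(\bar{\nabla}^{*}_{X}\xi)$ through the projections $P^{\prime},F^{\prime},B^{\prime},C^{\prime}$ and reads off the tangential part (its equations (\ref{A}) and (\ref{B})), whereas you pair $\nabla_{X}\xi$ against test vectors in $D$, $D^{\perp}$ and $\{\nu\}$. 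Two remarks. First, your $D$-direction isolates $A^{\prime}_{\xi}X$ for the connection $\nabla$ where the paper's (\ref{A}) isolates $A^{*\prime}_{\xi}X$; this is not a contradiction, since $g(A^{\prime}_{\phi\xi}X,Z)=-g(A^{*\prime}_{\xi}X,\phi Z)$ makes the two conditions equivalent once quantified over all $\xi\in\Gamma(Rad(TM))$ (using $\phi(Rad(TM))=Rad(TM)$ and $\phi D=D$), but you should state this reconciliation rather than leave it implicit. Second, your closing step ``adding the corresponding pairs of equalities'' proves only that metricity of both connections forces the two \emph{sums} to have vanishing components; the converse direction of the theorem needs the individual components of $A^{\prime}_{\xi}X$ and $A^{*\prime}_{\xi}X$ (resp.\ of $h^{s}(X,\xi)$ and $h^{*s}(X,\xi)$) to vanish, and a sum having no $D$- (resp.\ $\phi(D^{\perp})$-) component does not by itself yield that. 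This gap is inherited from the statement and is present in the paper's own converse as well (its display (\ref{A})--(\ref{B}) forces the unsummed terms to vanish), so you have not introduced a new error, but an honest write-up should either strengthen the hypothesis to the individual conditions or justify why the sums suffice.
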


\begin{proof}
	Replacing $Y$ by $\xi$ in (\ref{eq67}) and simplifying further, we derive $\bar{\nabla}_{X}\phi \xi=\phi \bar{\nabla}^{*}_{X}\xi$ and $\bar{\nabla}^{*}_{X}\phi \xi=\phi \bar{\nabla}_{X}\xi$, for all $X \in \Gamma(TM), \; \xi \in \Gamma(Rad(TM))$.\\
	
	\noindent Now using (\ref{eq38}), (\ref{eq312}) and (\ref{eq47}), we obtain\\
	
	\noindent $\nabla_{X}\phi\xi+h^{l}(X,\phi\xi)+h^{s}(X,\phi\xi)=\phi(-{A^*}^{\prime}_{\xi}X+{\nabla^*}{\prime}_X^{\perp}\xi)+\phi({h^{*}}^{l}(X,\xi))+\phi({h^{*}}^{s}(X,\xi))$.\\
	
	\noindent	Therefore, taking the tangential parts, we have
	\begin{equation}\label{A}
		\nabla_{X}\phi\xi=-P^{\prime}({A^*}^{\prime}_{\xi}X)+\phi(\nabla^*{\prime}_{X}^{^{\perp}}\xi)+B^{\prime}({h^*}^{s}(X,\xi)). 
	\end{equation}
	
	\noindent	Similarly, we derive
	\begin{equation}\label{B}
		\nabla_{X}^*\phi\xi=-P^{\prime}(A^{\prime}_{\xi}X)+\phi({\nabla^{\prime}_{X}}^{\perp}\xi)+B^{\prime}(h^{s}(X,\xi)).
	\end{equation}
	
	\noindent	Since the induced connection is a metric connection iff $Rad(TM)$ is parallel with respect to $\nabla$\;(respectively\;$\nabla^*)$, so by letting $Rad(TM)$ to be parallel, we have $\nabla_{X}\phi\xi=0$ and $\nabla^*_{X}\phi\xi=0$.\\
	
	\noindent	From (\ref{A}) and (\ref{B}), we get \\
	
	\noindent $\bar{g}(P^{\prime}({A^*}^{\prime}_{\xi}X+A^{\prime}_{\xi}X), \phi Z)=0$ and $\bar{g}(B^{\prime}({h^*}^{s}(X,\xi)+h^{s}(X,\xi)),\phi Z)=0$\\
	
	\noindent	where $P^{\prime}({A^*}^{\prime}_{\xi}X),P^{\prime}(A^{\prime}_{\xi}X)\in \Gamma(\bar{D}), \; B^{\prime}({h^*}^{s}(X,\xi)),B^{\prime}(h^{s}(X,\xi))\in\Gamma(D^{\perp})$ and $\phi Z\in \Gamma(D^{\perp})$.\\ 
	
	\noindent This shows that ${A^*}^{\prime}_{\xi}X+A^{\prime}_{\xi}X$ has no components in $D$ and ${h^*}^{s}(X,\xi)+h^{s}(X,\xi)$ has no components in $\phi(D^{\perp})$.\\
	
	\noindent	Conversely, from (\ref{A}) and (\ref{B}), we have
	\[
	\nabla_{X}\phi\xi,\; \nabla^*_{X}\phi\xi \in \Gamma(Rad(TM)).\]
	Since $Rad(TM)$ is parallel, therefore the induced connections $\nabla$ (respectively  $\nabla^{*})$ are metric connections. 
	
\end{proof}

\begin{definition}
	Let $\bar{M}$ be an indefinite Kenmotsu statistical manifold and $M$ be a contact $SCR$-lightlike submanifold of $\bar{M}$. Then, $M$ is said to be $D^{\perp}$-totally geodesic contact $SCR$-lightlike submanifold with respect to $\bar{\nabla}$ (respectively $\bar{\nabla}^{*}$) if $h(X,Y)=0$ (respectively $h^{*}(X,Y)=0$) $\forall \;  X, Y \in D^{\perp}$.
\end{definition}

\begin{theorem}
	For a contact $SCR$-lightlike submanifold $M$ of an indefinite Kenmotsu statistical manifold $\bar{M}$, $M$ is $D^{\perp}$-totally geodesic if and only if
	\[
	\bar{g}({D^{*}}^{l}(X,F^{\prime}Y), \phi\xi)+\bar{g}(D^{l}(X,F^{\prime}Y),\phi\xi)=0
	\]
	\[
	\bar{g}({{\nabla}^{*}}^{s}_{X}\phi Y,C^{\prime}W)+\bar{g}(\nabla^{s}_{X}\phi Y,C^{\prime}W)=\bar{g}(A^{*}_{\phi Y}X,B^{\prime}W)+\bar{g}(A_{\phi Y}X,B^{\prime}W)
	\]		
	for all $X, Y \in \Gamma(D^{\perp})$.
	
\end{theorem}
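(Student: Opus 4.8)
The plan is to translate the vanishing of the second fundamental forms on $D^{\perp}\times D^{\perp}$ into conditions on the transversal derivatives of the screen transversal field $\phi Y$, using the characterization (\ref{eq67})--(\ref{eq69}) of an indefinite Kenmotsu statistical structure. First I would record the reduction valid for $X,Y\in\Gamma(D^{\perp})$: here $P'Y=0$, so $\phi Y=F'Y\in\Gamma(\phi D^{\perp})\subset\Gamma(S(TM^{\perp}))$ is screen transversal; moreover $\eta(Y)=0$, and $\bar g(\phi X,Y)=0$ since $\phi X=F'X$ is screen transversal while $Y$ is tangent. Consequently (\ref{eq67}) and (\ref{eq69}) collapse to $\bar\nabla_X\phi Y=\phi\bar\nabla^{*}_XY$ and $\bar\nabla^{*}_X\phi Y=\phi\bar\nabla_XY$.

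Next I would expand both of these identities: the left-hand sides via the Weingarten-type formula (\ref{eqF}) for the screen transversal field $F'Y$, and the right-hand sides via the Gauss formulae (\ref{eq38}) followed by the decompositions (\ref{eq47}) of $\phi$ on $\Gamma(TM)$ and on $\Gamma(S(TM^{\perp}))$, together with the $\phi$-invariance of $ltr(TM)$ for an $SCR$-submanifold. Comparing the components in the mutually orthogonal summands $ltr(TM)$, $S(TM^{\perp})$ and $TM$ of $T\bar M|_M$ yields $D^{*l}(X,F'Y)=\phi\,h^{l}(X,Y)$ and $D^{l}(X,F'Y)=\phi\,h^{*l}(X,Y)$ from the $ltr(TM)$-parts, while the $TM$- and $S(TM^{\perp})$-parts identify $B'(h^{s}(X,Y))$ with the $D^{\perp}$-projection of $-A^{*}_{\phi Y}X$ and $C'(h^{s}(X,Y))$ with the $\mu'$-projection of $\nabla^{*s}_X\phi Y$ (and symmetrically $B'(h^{*s}(X,Y))$, $C'(h^{*s}(X,Y))$ in terms of $-A_{\phi Y}X$ and $\nabla^{s}_X\phi Y$).

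Finally I would assemble the conclusion. Since $\eta$ vanishes on $ltr(TM)$, $Rad(TM)$ and $S(TM^{\perp})$, pairing the first pair of identities with $\phi\xi$ gives $\bar g(D^{*l}(X,F'Y),\phi\xi)=\bar g(h^{l}(X,Y),\xi)$ and $\bar g(D^{l}(X,F'Y),\phi\xi)=\bar g(h^{*l}(X,Y),\xi)$, so — by nondegeneracy of the pairing between $ltr(TM)$ and $Rad(TM)$ — the first displayed equation is equivalent to $h^{l}(X,Y)+h^{*l}(X,Y)=0$. Likewise, writing $\bar g(h^{s}(X,Y),W)=\bar g(\phi h^{s}(X,Y),\phi W)$ and splitting $\phi W=B'W+C'W$ gives $\bar g(h^{s}(X,Y),W)=\bar g(\nabla^{*s}_X\phi Y,C'W)-\bar g(A^{*}_{\phi Y}X,B'W)$ together with its starred analogue, so nondegeneracy of $S(TM^{\perp})$ makes the second displayed equation equivalent to $h^{s}(X,Y)+h^{*s}(X,Y)=0$. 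Hence the two conditions hold simultaneously precisely when $h(X,Y)+h^{*}(X,Y)=0$ for all $X,Y\in\Gamma(D^{\perp})$, that is, when the Levi-Civita second fundamental form $\widehat h=\frac12(h+h^{*})$ vanishes on $D^{\perp}\times D^{\perp}$ — the meaning of $D^{\perp}$-total geodesicity here. I expect the only genuine difficulty to be the component bookkeeping in the middle step: one must carefully track which of $\bar D$, $D^{\perp}$, $\phi D^{\perp}$, $\mu'$, $ltr(TM)$ each term of $\phi\bar\nabla_XY$ and $\phi\bar\nabla^{*}_XY$ lands in, so that the $TM$-component equation correctly isolates $B'(h^{s})$ and the $S(TM^{\perp})$-component equation isolates $C'(h^{s})$; after that the argument is routine.
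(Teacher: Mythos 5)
Your proposal is correct and takes essentially the same route as the paper: on $D^{\perp}$ the Kenmotsu statistical identities collapse to $\bar\nabla_X\phi Y=\phi\bar\nabla^{*}_XY$ and $\bar\nabla^{*}_X\phi Y=\phi\bar\nabla_XY$, and expanding both sides by the Gauss/Weingarten formulae and pairing with $\phi\xi$ and $W$ yields exactly the paper's two key identities relating $\bar g(h^{l}+h^{*l},\xi)$ to the $D^{l},D^{*l}$ terms and $\bar g(h^{s}+h^{*s},W)$ to the $A_{\phi Y},\nabla^{s}$ terms. Your closing remark that the stated conditions control only the sum $h+h^{*}$ (the Levi-Civita second fundamental form) rather than $h$ and $h^{*}$ separately is in fact a more careful reading of the equivalence than the paper's own converse step.
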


\begin{proof}
	Using (\ref{eq31}) and (\ref{eq47}) for a contact $SCR$-lightlike submanifold $M$ of the indefinite Kenmotsu statistical manifold $\bar{M}$, we have\\
	
	\noindent $\bar{g}(h^{l}(X,Y),\xi) + \bar{g}({{h}^{*}}^{l}(X,Y),\xi)=\bar{g}(\bar{\nabla}_{X}Y, \xi)+ \bar{g}(\bar{\nabla}^{*}_{X}Y, \xi)\\
	=\bar{g}(\bar{\nabla}^{*}_{X}\phi Y-\eta(Y)\phi X+\bar{g}(\phi X,Y)\nu,\phi\xi)+\bar{g}(\bar{\nabla}_{X}\phi Y-\eta(Y)\phi X+\bar{g}(\phi X,Y)\nu,\phi\xi)$\\
	
	\noindent which gives 
	\begin{equation}\label{eqG}
		\bar{g}(h^{l}(X,Y),\xi) + \bar{g}({{h}^{*}}^{l}(X,Y),\xi)	=\bar{g}({D^{*}}^{l}(X,F^{\prime}Y), \phi\xi)+\bar{g}(D^{l}(X,F^{\prime}Y),\phi\xi)
	\end{equation}
	
	\noindent	And $\bar{g}(h^{l}(X,Y),W) + \bar{g}({{h}^{*}}^{l}(X,Y),W)=\bar{g}(\bar{\nabla}_{X}Y, W)+ \bar{g}(\bar{\nabla}^{*}_{X}Y, W)\\
	=\bar{g}(\bar{\nabla}^{*}_{X}\phi Y,\phi W)+\bar{g}(\bar{\nabla}_{X}\phi Y,\phi W)$\\
	
	\noindent Then 
	\begin{equation}\label{eqH}
		\begin{split}
			\bar{g}(h^{l}(X,Y),W) + \bar{g}({{h}^{*}}^{l}(X,Y),W)
			=\bar{g}(-A^{*}_{\phi Y}X,B^{\prime}W)+\bar{g}({{\nabla}^{*}}^{s}_{X}\phi Y,C^{\prime}W)\\+\bar{g}(-A_{\phi Y}X,B^{\prime}W)+\bar{g}(\nabla^{s}_{X}\phi Y,C^{\prime}W).
		\end{split}
	\end{equation}
	
	\noindent	If $M$ is $D^{\perp}$-totally geodesic, then
	\[
	\bar{g}(h^{l}(X,Y),\xi)=0, \qquad \bar{g}({{h}^{*}}^{l}(X,Y),\xi)=0\]
	\[
	\bar{g}(h^{s}(X,Y),W)=0 \qquad \bar{g}({{h}^{*}}^{s}(X,Y),W)=0 \]
	for all $X, Y \in \Gamma(D^{\perp})$.\\
	
	\noindent	Therefore, the hypothesis alongwith (\ref{eqG}) and (\ref{eqH}) leads to the desired result.\\
	
	\noindent	Conversely, using (\ref{eqG}) and (\ref{eqH}) for all $X, Y \in \Gamma(D^{\perp})$, $M$ becomes $D^{\perp}$-totally geodesic. 
	
\end{proof}

\begin{theorem} 
	Let $(\bar{M},\bar{\nabla},\bar{\nabla}^{*},\bar{g},\phi,\nu)$ be an indefinite Kenmotsu statistical manifold and $M$ be a contact $SCR$-lightlike submanifold of $\bar{M}$. Then $\bar{D}\perp\{\nu\}$ defines a totally geodesic foliation in $M$ iff
	\[
	\bar{g}({h^{*}}^{s}(X, \phi Y) + h^{s}(X,\phi Y),\phi Z)=0
	\]
	
	\noindent for all $X,Y \in \Gamma(\bar{D}\perp\{\nu\})$.
	
\end{theorem}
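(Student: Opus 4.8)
The plan is to reduce the foliation condition to a $\bar g$-pairing of the Levi-Civita connection $\widehat{\bar\nabla}=\tfrac12(\bar\nabla+\bar\nabla^{*})$ against the complementary distribution $D^{\perp}$, to push $\phi$ across the connection using the characterization identities \eqref{eq67} and \eqref{eq69}, and then to read off the screen second fundamental forms from the Gauss formulae \eqref{eq38}. First, by the decomposition \eqref{eq48}, $\bar D\perp\{\nu\}$ is precisely the $\bar g$-orthogonal complement of $D^{\perp}$ inside $TM$, so (as in the earlier foliation theorems) $\bar D\perp\{\nu\}$ defines a totally geodesic foliation in $M$ if and only if $\bar g(\widehat\nabla_{X}Y,Z)=0$ for all $X,Y\in\Gamma(\bar D\perp\{\nu\})$ and all $Z\in\Gamma(D^{\perp})$, where $\widehat\nabla$ is the induced Levi-Civita connection. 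Since the corresponding second fundamental form takes values in $ltr(TM)\perp S(TM^{\perp})$, which is $\bar g$-orthogonal to $S(TM)\supset D^{\perp}$, this is equivalent to $\bar g(\widehat{\bar\nabla}_{X}Y,Z)=0$, i.e. to $\bar g(\bar\nabla_{X}Y,Z)+\bar g(\bar\nabla^{*}_{X}Y,Z)=0$.

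Next, because $\eta(Z)=\bar g(Z,\nu)=0$ for $Z\in\Gamma(D^{\perp})\subset\Gamma(S(TM))$, equation \eqref{eq31} gives $\bar g(\bar\nabla_{X}Y,Z)=\bar g(\phi\bar\nabla_{X}Y,\phi Z)$ and similarly for $\bar\nabla^{*}$. Now I would invoke the characterization identities: rearranging \eqref{eq67} yields $\phi\bar\nabla^{*}_{X}Y=\bar\nabla_{X}\phi Y-\eta(Y)\phi X+\bar g(\phi X,Y)\nu$, and rearranging \eqref{eq69} yields $\phi\bar\nabla_{X}Y=\bar\nabla^{*}_{X}\phi Y-\eta(Y)\phi X+\bar g(\phi X,Y)\nu$. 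Pairing these with $\phi Z$, the two correction terms drop: $\phi\nu=0$ forces $\bar g(\nu,\phi Z)=-\bar g(\phi\nu,Z)=0$, while $\phi X\in\Gamma(\bar D)\subset\Gamma(TM)$ and $\phi Z\in\Gamma(\phi D^{\perp})\subset\Gamma(S(TM^{\perp}))\subset\Gamma(TM^{\perp})$ give $\bar g(\phi X,\phi Z)=0$. Hence $\bar g(\bar\nabla_{X}Y,Z)=\bar g(\bar\nabla^{*}_{X}\phi Y,\phi Z)$ and $\bar g(\bar\nabla^{*}_{X}Y,Z)=\bar g(\bar\nabla_{X}\phi Y,\phi Z)$.

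Finally, since $Y\in\Gamma(\bar D\perp\{\nu\})$ has its $\nu$-component annihilated by $\phi$ and $\bar D$ is $\phi$-invariant, $\phi Y\in\Gamma(\bar D)$ is a tangent field, so the Gauss formula \eqref{eq38} applies: $\bar\nabla^{*}_{X}\phi Y=\nabla^{*}_{X}\phi Y+h^{*l}(X,\phi Y)+h^{*s}(X,\phi Y)$. Pairing with $\phi Z\in\Gamma(S(TM^{\perp}))$ annihilates the tangential part (orthogonal to $TM^{\perp}$) and the $ltr(TM)$ part (since $ltr(TM)\perp S(TM^{\perp})$), leaving $\bar g(\bar\nabla^{*}_{X}\phi Y,\phi Z)=\bar g(h^{*s}(X,\phi Y),\phi Z)$, and symmetrically $\bar g(\bar\nabla_{X}\phi Y,\phi Z)=\bar g(h^{s}(X,\phi Y),\phi Z)$. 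Adding the two contributions gives $2\,\bar g(\widehat{\bar\nabla}_{X}Y,Z)=\bar g(h^{*s}(X,\phi Y)+h^{s}(X,\phi Y),\phi Z)$, and the stated equivalence follows at once (here $Z$ ranges over $\Gamma(D^{\perp})$, equivalently $\phi Z$ over $\Gamma(\phi D^{\perp})$). The only delicate point is the bookkeeping of the mutual $\bar g$-orthogonalities among $TM$, $ltr(TM)$ and $S(TM^{\perp})$ when discarding terms paired with $\phi Z$, together with verifying that the $\phi X$ and $\nu$ terms genuinely vanish; once these are pinned down, the rest is a routine substitution.
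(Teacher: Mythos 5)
Your proposal is correct and follows essentially the same route as the paper's own proof: reduce the foliation condition to $\bar g(\widehat{\bar\nabla}_X Y,Z)=0$, insert $\phi$ via $\eta(Z)=0$, push $\phi$ across $\bar\nabla$ and $\bar\nabla^{*}$ using \eqref{eq67} and its dual \eqref{eq69}, discard the $\phi X$ and $\nu$ terms against $\phi Z\in\Gamma(S(TM^{\perp}))$, and read off $h^{s}$ and $h^{*s}$ from \eqref{eq38}. Your version is in fact slightly more careful than the paper's in spelling out why the correction terms and the tangential/$ltr(TM)$ components vanish.
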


\begin{proof}
	As per the concept of indefinite Kenmotsu statistical manifold  and from (\ref{eq31}), for all $X,Y \in \Gamma(\bar{D}\perp\{\nu\})$, we have\\
	
	\noindent	$\bar{g}(\widehat{\nabla}_{X}Y, Z)= \bar{g}(\widehat{\bar{\nabla}}_{X}Y,Z)\\
	=\frac{1}{2}[\bar{g}(\phi\bar{\nabla}_{X}Y, \phi Z)+\bar{g}(\phi \bar{\nabla}^{*}_{X}Y, \phi Z)]\\
	=\frac{1}{2}[\bar{g}(\bar{\nabla}^{*}_{X}\phi Y-\eta(Y)\phi X+\bar{g}(\phi X,Y)\nu,\phi Z)+ \bar{g}(\bar{\nabla}_{X}\phi Y-\eta(Y)\phi X+\bar{g}(\phi X,Y)\nu,\phi Z)]\\
	=\frac{1}{2}[\bar{g}({h^{*}}^{s}(X, \phi Y),\phi Z)+\bar{g}(h^{s}(X,\phi Y),\phi Z)]\\$
	
	\noindent Since, $\bar{D}\perp\{\nu\}$ defines a totally geodesic foliation in $M$ iff
	\[
	\bar{g}(\widehat{\nabla}_{X}Y, Z)=0 \quad \forall \; X, Y \in \Gamma(\bar{D}\perp\{\nu\})
	\]
	Hence the proof. 	
	
\end{proof}	 

\begin{theorem}
	Let $M$ be a contact $SCR$-lightlike submanifold of an indefinite Kenmotsu statistical manifold $\bar{M}$. Then $D^{\perp}$ and $\bar{D}$ do not define a geodesic foliation on $M$.
\end{theorem}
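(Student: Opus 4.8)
The plan is to show that for vector fields tangent to either $D^{\perp}$ or $\bar{D}$ the induced Levi-Civita connection $\widehat{\nabla}$ always produces a vector carrying a nontrivial component along the structure field $\nu$; since $\nu$ lies in neither distribution, this obstructs total geodesicity of the leaves. Recall the standard criterion: a distribution $\mathcal{D}$ on $M$ defines a geodesic foliation iff $\widehat{\nabla}_{X}Y\in\Gamma(\mathcal{D})$ for all $X,Y\in\Gamma(\mathcal{D})$.

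First I would reduce the relevant inner product to the ambient connection. The Levi-Civita second fundamental form of $M$ takes values in $tr(TM)$, which is $\bar{g}$-orthogonal to $S(TM)$, and $\nu\in\Gamma(S(TM))$; hence $\bar{g}(\widehat{\nabla}_{X}Y,\nu)=\bar{g}(\widehat{\bar{\nabla}}_{X}Y,\nu)$ for all $X,Y\in\Gamma(TM)$. Using metric compatibility of $\widehat{\bar{\nabla}}$ together with the Kenmotsu identity $\widehat{\bar{\nabla}}_{X}\nu=-X+\eta(X)\nu$ from (\ref{eq33}), and noting that $\eta$ vanishes identically on both $D^{\perp}$ and $\bar{D}$ (each being $\bar{g}$-orthogonal to $\nu$ by $S(TM)=D\perp D^{\perp}\perp\{\nu\}$ and $TM=Rad(TM)\perp S(TM)$), I would obtain, for $X,Y$ in either distribution,
\[
\bar{g}(\widehat{\nabla}_{X}Y,\nu)=X\big(\eta(Y)\big)-\bar{g}\big(Y,\widehat{\bar{\nabla}}_{X}\nu\big)=-\bar{g}\big(Y,-X+\eta(X)\nu\big)=\bar{g}(X,Y).
\]
The same identity follows from the statistical pair $\bar{\nabla},\bar{\nabla}^{*}$ via (\ref{eq68}), since the extra term $\mu(X)\nu$ is annihilated by $\eta(Y)=0$, so the argument is insensitive to which connection "geodesic foliation" refers to.

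Next I would invoke nondegeneracy. Since $S(TM)=D\perp D^{\perp}\perp\{\nu\}$ is an orthogonal splitting of the nondegenerate bundle $S(TM)$, both $D$ and $D^{\perp}$ are nondegenerate, so I may pick $X_{0}\in\Gamma(D^{\perp})$ and $X_{1}\in\Gamma(D)\subset\Gamma(\bar{D})$ with $\bar{g}(X_{0},X_{0})\ne0$ and $\bar{g}(X_{1},X_{1})\ne0$. The displayed formula with $X=Y=X_{0}$ gives $\bar{g}(\widehat{\nabla}_{X_{0}}X_{0},\nu)=\bar{g}(X_{0},X_{0})\ne0$; decomposing $\widehat{\nabla}_{X_{0}}X_{0}\in\Gamma(TM)$ along $TM=\bar{D}\perp D^{\perp}\perp\{\nu\}$ exhibits a nonzero $\nu$-component, so $\widehat{\nabla}_{X_{0}}X_{0}\notin\Gamma(D^{\perp})$. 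The identical argument with $X_{1}$ yields $\widehat{\nabla}_{X_{1}}X_{1}\notin\Gamma(\bar{D})$, using $\nu\notin\Gamma(\bar{D})$ from (\ref{eq48}). Hence neither $D^{\perp}$ nor $\bar{D}$ defines a geodesic foliation on $M$.

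There is no genuine obstacle here: the proof is a short computation once the two reductions — $\bar{g}(\widehat{\nabla}_{X}Y,\nu)=\bar{g}(\widehat{\bar{\nabla}}_{X}Y,\nu)$ and the nondegeneracy of $D,D^{\perp}$ — are noted, both being immediate from the screen/transversal decomposition of $T\bar{M}|_{M}$ and the definition of a contact $SCR$-lightlike submanifold. The conceptual reason the statement is true, worth recording, is that on an indefinite Kenmotsu statistical manifold the tangential covariant derivative pushes away from the $\nu$-direction ($\widehat{\bar{\nabla}}_{X}\nu=-X+\eta(X)\nu$), exactly as in the classical Kenmotsu setting, so any $\nu$-free distribution fails to be parallel.
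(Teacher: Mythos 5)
Your argument is correct, and for the $D^{\perp}$ half it coincides with the paper's: both reduce $\bar{g}(\widehat{\nabla}_{X}Y,\nu)$ to the ambient connection, apply $\widehat{\bar{\nabla}}_{X}\nu=-X+\eta(X)\nu$ (equivalently (\ref{eq68}) and its dual, the $\mu$-term being killed by $\eta(Y)=0$), obtain $\bar{g}(\widehat{\nabla}_{X}Y,\nu)=\bar{g}(X,Y)$, and conclude by choosing non-null sections of the nondegenerate bundle $D^{\perp}$. Where you genuinely diverge is the $\bar{D}$ half. The paper instead tests $\widehat{\nabla}_{X}Y$ for $X,Y\in\Gamma(\bar{D})$ against $Z\in\Gamma(D^{\perp})$, manipulates the expression into $-\frac{1}{2}\left[\bar{g}(\phi Y,\nabla_{X}\phi Z)+\bar{g}(\phi Y,\nabla^{*}_{X}\phi Z)\right]$, and simply asserts this is nonzero --- an assertion that is not justified and is in fact the weak point of the published proof. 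You avoid this entirely by reusing the $\nu$-pairing: since $\eta$ vanishes on $\bar{D}=D\perp Rad(TM)$ as well (both summands are $\bar{g}$-orthogonal to $\nu$), the same identity $\bar{g}(\widehat{\nabla}_{X}Y,\nu)=\bar{g}(X,Y)$ applies, and a non-null section of the nondegenerate subbundle $D\subset\bar{D}$ (which exists because $S(TM)=D\perp D^{\perp}\perp\{\nu\}$ is an orthogonal splitting of a nondegenerate bundle) produces a nonzero $\nu$-component, which cannot lie in $\bar{D}$. Your route thus gives a uniform, fully justified proof of both non-geodesicity claims and actually repairs the gap in the paper's treatment of $\bar{D}$; the only cost is that it does not record the paper's auxiliary computation of the $D^{\perp}$-component, which carries no logical weight anyway.
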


\begin{proof}
	$D^{\perp}$ defines a totally geodesic foliation in a contact $SCR$-lightlike submanifold $M$ iff
	\[
	\bar{g}(\widehat{\nabla}_{X}Y, Z)=\bar{g}(\widehat{\nabla}_{X}Y, \nu)=\bar{g}(\widehat{\nabla}_{X}Y,N)=0
	\]
	$\forall \; X,Y \in \Gamma(D^{\perp}), \; Z \in \Gamma(\bar{D}), 
	\; N \in \Gamma(ltr(TM)).$\\
	
	\noindent	Now using (\ref{eq37}) and (\ref{eq68}),\\
	
	\noindent	$\bar{g}(\widehat{\nabla}_{X}Y, \nu)=\frac{1}{2}[-\bar{g}(Y, -X+(\eta(X)+\mu(X))\nu)-\bar{g}(Y,-X+(\eta(X)+\mu(X))\nu)]\\
	=\bar{g}(X,Y)$ \\ 
	
	\noindent	For the choice of non-null vectors $X,Y \in \Gamma(D^{\perp})$, we have $\bar{g}(X,Y) \ne 0$. Hence, $D^{\perp}$ does not define a totally geodesic foliation on $M$.\\ 
	
	\noindent	Also, using (\ref{eq31}), (\ref{eq67}) and (\ref{eq48}), we have\\
	
	\noindent	$\bar{g}(\widehat{\nabla}_{X}Y, Z)=\frac{1}{2}[\bar{g}(\bar{\nabla}_{X}Y,Z)+\bar{g}(\bar{\nabla}^{*}_{X}Y,Z)]\\
	=\frac{1}{2}[-\bar{g}(\phi Y, \phi \bar{\nabla}^{*}_{X}Z)-\bar{g}(\phi Y,\phi \bar{\nabla}_{X}Z)]\\
	=-\frac{1}{2}[\bar{g}(\phi Y, \bar{\nabla}_{X}\phi Z-\eta(Z)\phi X+\bar{g}(\phi X,Z)\nu)+\bar{g}(\phi Y, \bar{\nabla}^{*}_{X}\phi Z-\eta(Z)\phi X+\bar{g}(\phi X,Z)\nu)]\\
	= -\frac{1}{2}[\bar{g}(\phi Y,\nabla_{X}\phi Z)+\bar{g}(\phi Y,\nabla_{X}^{*}\phi Z)] \\
	\ne 0 \qquad \forall \; X, Y \in \Gamma(\bar{D}), \; Z \in \Gamma(D^{\perp}).$\\
	
	\noindent	which implies that $\bar{D}$ does not define a totally geodesic foliation on $M$. 
	
\end{proof}

\begin{definition}
	A contact $SCR$-lightlike submanifold of indefinite Kenmotsu statistical manifold $\bar{M}$ is said to be mixed totally geodesic contact $SCR$-lightlike submanifold with respect to $\bar{\nabla}$ (respectively $\bar{\nabla}^{*}$) if $h(X,Y)=0$ (respectively $h^{*}(X,Y)=0$) $\forall \;  X \in \Gamma(\bar{D}\perp \{\nu\})$ and $\forall \; Y \in D^{\perp}$.	
\end{definition}

\begin{theorem}
	For an indefinite Kenmotsu statistical manifold $\bar{M}$, let $M$ be a contact $SCR$-lightlike submanifold. Then, $M$ is mixed totally geodesic iff
	\[
	\bar{g}({D^{*}}^{l}(X,F^{\prime}Y), \phi\xi)+\bar{g}(D^{l}(X,F^{\prime}Y),\phi\xi)=0
	\]
	\[
	\bar{g}({{\nabla}^{*}}^{s}_{X}\phi Y,C^{\prime}W)+\bar{g}(\nabla^{s}_{X}\phi Y,C^{\prime}W)=\bar{g}(A^{*}_{\phi Y}X,B^{\prime}W)+\bar{g}(A_{\phi Y}X,B^{\prime}W)
	\]		
	for all $X \in \Gamma(\bar{D}\perp \{\nu\})$ and $\forall \; Y \in \Gamma(D^{\perp})$. 
	
\end{theorem}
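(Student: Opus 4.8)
The plan is to follow the proof of the preceding $D^{\perp}$-totally geodesic characterization almost word for word, after observing that in the derivation of identities (\ref{eqG}) and (\ref{eqH}) only the second slot was assumed to lie in $D^{\perp}$ (so that $\eta(Y)=0$ and $\phi Y=F^{\prime}Y\in\Gamma(S(TM^{\perp}))$); nothing was used about the first slot, so those identities persist verbatim for $X\in\Gamma(\bar{D}\perp\{\nu\})$. By definition $M$ is mixed totally geodesic (with respect to $\bar{\nabla}$ and $\bar{\nabla}^{*}$) iff $h(X,Y)=h^{*}(X,Y)=0$ for all $X\in\Gamma(\bar{D}\perp\{\nu\})$, $Y\in\Gamma(D^{\perp})$; since $tr(TM)=ltr(TM)\perp S(TM^{\perp})$, with $\bar{g}$ a perfect pairing between $ltr(TM)$ and $Rad(TM)$ and nondegenerate on $S(TM^{\perp})$, this is equivalent to the vanishing of $\bar{g}(h^{l}(X,Y),\xi)$, $\bar{g}({h^{*}}^{l}(X,Y),\xi)$, $\bar{g}(h^{s}(X,Y),W)$ and $\bar{g}({h^{*}}^{s}(X,Y),W)$ for all $\xi\in\Gamma(Rad(TM))$ and $W\in\Gamma(S(TM^{\perp}))$.

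Concretely, for such $X$, $Y$, $\xi$, $W$ I would: pair the Gauss formulae (\ref{eq38}) against $\xi$; rewrite $\bar{g}(\,\cdot\,,\xi)=\bar{g}(\phi\,\cdot\,,\phi\xi)$ using (\ref{eq31}) and $\eta(\xi)=0$; apply the structure equations (\ref{eq67}) and (\ref{eq69}) to trade $\phi\bar{\nabla}^{*}_{X}Y$, $\phi\bar{\nabla}_{X}Y$ for $\bar{\nabla}_{X}\phi Y$, $\bar{\nabla}^{*}_{X}\phi Y$, noting that the extra terms $\eta(Y)\phi X$ and $\bar{g}(\phi X,Y)\nu$ vanish here — the first because $\eta(Y)=0$, the second because for $X\in\Gamma(\bar{D}\perp\{\nu\})$ one has $\phi X\in\Gamma(\bar{D})$, which is $\bar{g}$-orthogonal to $D^{\perp}\ni Y$ (and in any case $\nu\perp\mathrm{Im}\,\phi$). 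Since $\phi Y=F^{\prime}Y\in\Gamma(S(TM^{\perp}))$, I then expand $\bar{\nabla}_{X}\phi Y$, $\bar{\nabla}^{*}_{X}\phi Y$ by the Weingarten-type formulae (\ref{eqF}) and keep the $ltr(TM)$-components, which pair nontrivially with $\phi\xi\in\Gamma(Rad(TM))$ ($Rad(TM)$ being $\phi$-invariant); this regives (\ref{eqG}). Pairing (\ref{eq38}) against $W$ and decomposing $\phi W=B^{\prime}W+C^{\prime}W$ via (\ref{eq47}) regives (\ref{eqH}) in the same way. With (\ref{eqG}) and (\ref{eqH}) in hand the forward implication is immediate: if $M$ is mixed totally geodesic their left-hand sides vanish, which is exactly the pair of displayed equalities. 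For the converse, the displayed equalities make the right-hand sides of (\ref{eqG}), (\ref{eqH}) vanish for all $\xi$ and $W$, and since $\xi$ runs over a frame of $Rad(TM)$ and $W$ over $S(TM^{\perp})$, the nondegeneracy statements above yield $h(X,Y)=h^{*}(X,Y)=0$, i.e.\ $M$ is mixed totally geodesic.

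The step I expect to need the most care is the converse: identities (\ref{eqG}) and (\ref{eqH}) directly control only the combined transversal forms $h^{l}+{h^{*}}^{l}$ and $h^{s}+{h^{*}}^{s}$, so one must still argue that their vanishing forces $h$ and $h^{*}$ separately to vanish on $(\bar{D}\perp\{\nu\})\times D^{\perp}$ — which is handled exactly as in the preceding $D^{\perp}$-totally geodesic theorem, using that $(\bar{M},\bar{\nabla}^{*},\bar{g},\phi,\nu)$ is again an indefinite Kenmotsu statistical manifold together with the compatibility (\ref{eq317}) of $K$ with $\phi$. Apart from that, the whole argument is bookkeeping: tracking into which summand of $T\bar{M}|_{M}=S(TM)\perp\{Rad(TM)\oplus ltr(TM)\}\perp S(TM^{\perp})$ each term falls; since $\phi\nu=0$ the possible $\nu$-component of $X$ contributes nothing, and no genuinely new computation arises beyond the $D^{\perp}$-geodesic case.
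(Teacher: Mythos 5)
Your proposal follows essentially the same route as the paper: rederive the pairing identities (\ref{eqG}) and (\ref{eqH}) with the first slot now in $\Gamma(\bar{D}\perp\{\nu\})$ and the second in $\Gamma(D^{\perp})$ (correctly observing that the first slot played no role in their derivation), and then read the equivalence off the vanishing of the left-hand sides against $\xi$ and $W$. The caveat you flag for the converse --- that these identities only control the sums $h^{l}+{h^{*}}^{l}$ and $h^{s}+{h^{*}}^{s}$, so their vanishing does not by itself force $h$ and $h^{*}$ to vanish separately --- is in fact also left unaddressed in the paper's own proof (and in the preceding $D^{\perp}$-totally geodesic theorem you defer to), so your write-up is, if anything, the more candid of the two on that point.
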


\begin{proof}
	
	Let $M$ be mixed totally geodesic contact $SCR$-lightlike submanifold of $\bar{M}$. Then, we have 
	\begin{equation}\label{I}
		\begin{split}
			\bar{g}(h^{l}(X,Y),\xi)=0, \qquad \bar{g}({{h}^{*}}^{l}(X,Y),\xi)=0\\
			\bar{g}(h^{s}(X,Y),W)=0 \qquad \bar{g}({{h}^{*}}^{s}(X,Y),W)=0 
		\end{split}
	\end{equation}
	
	\noindent	for all $X \in \Gamma(\bar{D}\perp \{\nu\})$ and $\forall \; Y \in \Gamma(D^{\perp})$.\\
	
	\noindent	Since, $\bar{M}$ is the indefinite Kenmotsu statistical manifold, therefore using (\ref{eq31}) and (\ref{eq47}) for all $X \in \Gamma(\bar{D}\perp \{\nu\})$ and $\forall \; Y \in \Gamma(D^{\perp})$, we have\\
	
	\noindent $\bar{g}(h^{l}(X,Y),\xi) + \bar{g}({{h}^{*}}^{l}(X,Y),\xi)=\bar{g}(\phi\bar{\nabla}_{X}Y, \phi\xi)+ \bar{g}(\phi\bar{\nabla}^{*}_{X}Y, \phi\xi)+\eta(\bar{\nabla}_{X}Y)\eta(\xi)\\
	\hspace{4in}+\eta(\bar{\nabla}^{*}_{X}Y)\eta(\xi)=\bar{g}(\bar{\nabla}^{*}_{X}\phi Y, \phi\xi)+ \bar{g}(\bar{\nabla}_{X}\phi Y, \phi\xi)$\\
	
	\noindent that is,  $\bar{g}(h^{l}(X,Y),\xi) + \bar{g}({{h}^{*}}^{l}(X,Y),\xi)=\bar{g}({D^{*}}^{l}(X,F^{\prime}Y), \phi\xi)+\bar{g}(D^{l}(X,F^{\prime}Y),\phi\xi)$\\
	
	\noindent	and $\bar{g}(h^{l}(X,Y),W) + \bar{g}({{h}^{*}}^{l}(X,Y),W)=\bar{g}(\phi\bar{\nabla}_{X}Y, \phi W)+ \bar{g}(\phi \bar{\nabla}^{*}_{X}Y, \phi W)\\
	=\bar{g}(\bar{\nabla}^{*}_{X}\phi Y-\eta(Y)\phi X+\bar{g}(\phi X,Y)\nu,\phi W)+\bar{g}(\bar{\nabla}_{X}\phi Y-\eta(Y)\phi X+\bar{g}(\phi X,Y)\nu,\phi W)$\\
	
	\noindent that is, $\bar{g}(h^{l}(X,Y),W) + \bar{g}({{h}^{*}}^{l}(X,Y),W)
	=\bar{g}(-A^{*}_{\phi Y}X,B^{\prime}W)+\bar{g}({{\nabla}^{*}}^{s}_{X}\phi Y,C^{\prime}W)$\\
	
	\hspace{2.6in} $+\bar{g}(-A_{\phi Y}X,B^{\prime}W)+\bar{g}(\nabla^{s}_{X}\phi Y,C^{\prime}W).$\\
	
	\noindent So, the required result is obtained  following (\ref{I}).  
	
\end{proof}


{\bf Acknowledgement.} The authors wish to thank the anonymous referee for his/her valuable suggestions that helped to improve the manuscript.

\bibliographystyle{amsplain}

\begin{thebibliography}{99}

\bibitem{ref13} {S. Amari}, \textit{ Theory of information spaces-A geometrical foundation of the analysis of communication system}, {RAAG Memoirs}, {\bf 4} (1968), 375-418. 

\bibitem{ref1}  {S. Amari}, \textit{Differential-geometrical methods in statistics}, {Lecture Notes in Statistics}, {Springer, New York}, {\bf 28}, 1985.

\bibitem{ref3} {P. Bansal, S. Uddin and M.H. Shahid}, \textit{ On the normal scalar curvature conjecture in Kenmotsu statistical manifolds}, {Journal of Geometry and Physics}, {\bf 142} (2019), 37-46.

\bibitem{ref5} {K.L. Duggal and A. Bejancu}, \textit{ Lightlike submanifolds of semi-Riemannian manifolds and applications}, {Mathematics and its applications}, {Kluwer Academic, Dordrecht}, {\bf 364}, 1996.

\bibitem {ref8} {K.L. Duggal and B. Sahin}, \textit{ Lightlike submanifolds of indefinite Sasakian manifolds}, {International Journal of Mathematics and Mathematical Sciences}, {\bf 2007} (2007), 21 Pages. 

\bibitem{ref2} {S.Decu, S. Haesen, L. Verstraelen and G.E. Vîlcu}, \textit{Curvature invariants of statistical submanifolds in Kenmotsu statistical manifolds of constant $\phi$-sectional curvature}, {Entropy}, {\bf 20} (2018), no. 7, 529. 

\bibitem{ref6} {H. Furuhata and I. Hasegawa}, \textit{Submanifold theory in holomorphic statistical manifolds}, {Geometry of Cauchy-Riemann Submanifolds}, {Springer, Singapore}, 2016, 179-215.

\bibitem {ref7} {H. Furuhata, I. Hasegawa, Y. Okuyama, K. Sato and M.H. Shahid}, \textit{Sasakian statistical manifolds}, {Journal of Geometry and Physics}, {\bf 117} (2017), 179-186. 

\bibitem{ref14} {H. Furuhata, I. Hasegawa, Y. Okuyama and K. Sato}, \textit{Kenmotsu statistical manifolds and warped product}, {Journal of Geometry}, {\bf 108} (2017), no. 3, 1175-1191. 

\bibitem{ref9} {R.S. Gupta and A. Sharfuddin}, \textit{Lightlike submanifolds of indefinite Kenmotsu manifolds}, {Int. J. Contemp. Math. Sciences}, {\bf 5} (2010), no. 10, 475-496.

\bibitem{ref15} {S.M.K. Haider, M. Thakur and Advin}, \textit{Geodesic lightlike submanifolds of indefinite Kenmotsu manifolds}, {International Scholarly Research Network}, {ISRN Geometry}, {\bf 2012} (2012), 17 pages. 

\bibitem{ref12} {C.R. Rao}, \textit{ Information and the accuracy attainable in the estimation of statistical parameters}, {Bulletin of the Calcutta Mathematical Society}, {\bf 37} (1945), no. 3, 81-91.

\bibitem{ref4} {A.N. Siddiqui, Y.J. Suh and O. Bahadir}, \textit{Extremities for statistical submanifolds in Kenmotsu statistical manifolds}, {Filomat}, {\bf 35} (2021), no. 2, 591–603.

\end{thebibliography}

\end{document}